\documentclass[12pt,a4paper]{article}

\usepackage{amssymb,amsmath,amsthm}
\usepackage[english]{babel}

\newtheorem{thm}{Theorem}
\newtheorem{prop}[thm]{Proposition}
\newtheorem{cor}[thm]{Corollary}

\newtheorem{lem}[thm]{Lemma}

\newtheorem{claim}[thm]{Claim}

\newtheorem{conj}[thm]{Conjecture}

\usepackage{indentfirst}
\frenchspacing

\begin{document}

\title{Generalizations of the Tree Packing Conjecture}
\author{D\'aniel Gerbner$^{\rm a,}$\footnote{Research supported by OTKA NK 78439}
\and
Bal\'azs Keszegh$^{\rm a,b,*,}$\footnote{Research supported by Swiss National
Science Foundation, Grant No. 200021-125287/1}
\and
Cory Palmer$^{\rm a,*}$\footnote{Corresponding author; corypalmer@gmail.com}
\and
\small $^{\rm a}${\it Hungarian Academy of Sciences, Alfr\'ed R\'enyi Institute}\\[-0.8ex]
\small {\it of Mathematics, P.O.B. 127, Budapest H-1364, Hungary}\\
\small $^{\rm b}${\it Ecole Polytechnique F\'ed\'erale de Lausanne}\\[-0.8ex]
\small {\it  EPFL-SB-IMB-DCG, 1015 Lausanne, Switzerland}\\
}

\maketitle

\abstract{The Gy\'arf\'as tree packing conjecture asserts that any
set of trees with $2,3, \dots , k$ vertices has an (edge-disjoint) packing into the complete graph on $k$ vertices. Gy\'arf\'as and Lehel proved that the conjecture holds in some special cases. We address the problem of packing trees into $k$-chromatic graphs. In particular, we prove that if all but three of the trees are stars then they have a packing into any $k$-chromatic graph. We also consider several other generalizations of the conjecture.
{\it Keywords:} packing; tree packing\\
{\it 2010 Mathematics Subject Classification}: 05C70, 05C05
}

\section{Introduction}\label{intro}
A set of (simple) graphs $G_1,G_2, \dots, G_k$ has a \emph{packing} into a graph $H$ if
$G_1,G_2, \dots, G_k$ appear as edge-disjoint subgraphs of $H$. In general we are concerned with
the case when each $G_i$ is a tree.
One of the best-known packing problems is the Tree Packing Conjecture (TPC) posed by Gy\'arf\'as \cite{GyLe}:

\begin{conj}[TPC]\label{theconjecture}
For $2 \leq i \leq n$, let $T_i$ be a tree on $i$ vertices. Then
the set of trees $T_2, \dots, T_n$ has a packing into the complete graph on $n$ vertices.
\end{conj}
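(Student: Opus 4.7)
The plan is to pack the trees into $K_n$ greedily from largest to smallest. The first observation is that the edge budget is tight: $\sum_{i=2}^{n}(i-1)=\binom{n}{2}=|E(K_n)|$, so a successful packing must use every edge of $K_n$ exactly once. I would embed $T_n$ as an arbitrary spanning tree, then for $i=n-1,n-2,\ldots,2$ try to embed $T_i$ into the residual host $H_i := K_n \setminus \bigcup_{j>i} E(T_j)$.

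For a single embedding step, I would root $T_i$ at a leaf and embed it vertex-by-vertex in a BFS order, at each step mapping a new vertex to an unused neighbor (in $H_i$) of the already-embedded parent. Two ingredients must support this: a degree-sum argument guaranteeing that $H_i$ retains enough unused edges at every vertex to accommodate $T_i$, and a local flexibility argument using the fact that every tree on at least two vertices has at least two leaves, so the last few vertices can be absorbed into whatever edges remain.

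A central refinement would be to choose the ordering more carefully. Packing stars strictly largest-first is disastrous: a star on $i$ vertices consumes all $i-1$ edges at a single vertex, destroying local degree-regularity for all subsequent embeddings. It seems essential either to interleave stars with ``spreading'' trees (paths, caterpillars) or to reserve dedicated vertices to host the star portion of the packing --- a strategy that underlies the partial results announced in the abstract. A probabilistic variant would embed each $T_i$ as a uniformly random copy in $H_i$ and use concentration to control the overlap with previously used edges.

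The hard part --- and the reason this conjecture has remained open for decades --- is the complete absence of slack in the edge count. Standard packing machinery (absorbers, the nibble, randomized rotations) typically requires the host to have a small surplus of unused edges at every vertex, which is unavailable here. One would need an accounting that exploits the degree sequences of the $T_i$'s essentially exactly, and controlling the interaction between large trees of very different shape (for example a long path $T_i$ followed by a near-spanning star $T_{i-1}$) is where I expect any attempted proof to break down.
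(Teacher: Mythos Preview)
The statement you were asked to prove is Conjecture~\ref{theconjecture}, the Tree Packing Conjecture itself. The paper does \emph{not} prove it: it is stated as an open conjecture, and the paper's contribution (Theorem~\ref{main-thm}) is the special case where all but three of the trees are stars, packed into an arbitrary $k$-chromatic graph. So there is no ``paper's own proof'' to compare against.

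Your write-up is not a proof either, and to your credit you do not pretend otherwise: you describe a greedy largest-first strategy, note that the edge count is exactly tight, observe that stars ruin any na\"{\i}ve degree-counting argument, and then explicitly identify where you expect the approach to fail. That diagnosis is accurate and matches the reason the TPC is still open. But as a proof proposal it has a genuine gap at precisely the point you name: the ``degree-sum argument guaranteeing that $H_i$ retains enough unused edges at every vertex'' does not exist in general, because with zero slack the residual graph $H_i$ can have vertices of degree strictly less than $i-1$ even when only a few large trees have been removed (your own star example shows this). No amount of reordering or probabilistic smoothing is known to repair this without additional hypotheses on the trees.

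If the intent was to address what the paper actually proves, you would need to restrict to the setting of Theorem~\ref{main-thm} (at most three non-stars) and argue by induction on $k$, peeling off carefully chosen pending stars and leaves from the large non-stars so that the truncated sequence fits the inductive hypothesis, then reattaching the removed pieces using fresh color classes of a Grundy-type coloring. That is a very different structure from the greedy scheme you outline.
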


A number of partial results related to the TPC have been found. The first results are by Gy\'arf\'as and Lehel \cite{GyLe} who proved that
the TPC holds with the additional assumption that all but two of the trees are stars.
Gy\'arf\'as and Lehel also showed that the TPC is true if each tree is either a path or a star. A second proof is by Zaks and Liu \cite{ZaLi}.
Bollob\'as \cite{Bo} showed that
the trees $T_2, \dots, T_s$ have a packing into $K_n$ if $s \leq n/\sqrt{2}$ and $T_i$ has $i$ vertices.
From the other side, Hobbs, Bourgeois and Kasiraj \cite{HoBoKa} showed that any three trees $T_n, T_{n-1}, T_{n-2}$ have a packing into $K_n$ if $T_i$ has $i$ vertices. A series of papers by Dobson \cite{Do1,Do2,Do3} concerns packing trees with some technical conditions.

Instead of packing trees into the complete graph, a number of papers have examined packing trees into complete bipartite graphs. Hobbs et al. \cite{HoBoKa} conjectured that the trees $T_2,\dots T_n$ have a packing into the complete bipartite graph
$K_{n-1, \lceil n/2 \rceil}$ if $T_i$ has $i$ vertices. The conjecture is true if each of the trees is a star or path. The case when $n$ is even was shown by Zaks and Liu \cite{ZaLi} and when $n$ is odd by Hobbs \cite{Ho}. Yuster \cite{Yu} showed that $T_2, \dots, T_s$ have a packing into $K_{n-1,\lceil n/2 \rceil}$ if $s \leq \lfloor \sqrt{5/8}n \rfloor$ and $T_i$ has $i$ vertices (improving the previously best-known bound by Caro and Roditty \cite{CaRo}).

Now we introduce a conjecture that would imply the TPC:

\begin{conj}\label{chromatic-conj}
For $2 \leq i \leq k$, let $T_i$ be a tree on $i$ vertices.
If $G$ is a $k$-chromatic graph, then the set of trees $T_2, \dots, T_k$ has a packing into $G$.
\end{conj}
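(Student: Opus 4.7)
The plan is to proceed by induction on $k$, packing the largest tree first. The base case $k=2$ is trivial, since a $2$-chromatic graph contains an edge and hence a copy of $T_2$. For the inductive step, the first move is to pass to a $k$-critical subgraph $H \subseteq G$; any packing into $H$ automatically gives a packing into $G$. Such an $H$ has minimum degree at least $k-1$, so by a standard greedy argument $H$ contains every tree on $k$ vertices, and in particular some copy $F$ of $T_k$.

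The heart of the argument is to choose $F$ so that $H - E(F)$ still contains a $(k-1)$-chromatic subgraph; the inductive hypothesis then packs $T_2, \dots, T_{k-1}$ into that subgraph, and together with $F$ we obtain the desired packing. A useful sanity check on the approach is that the edge counts are tight: $\sum_{i=2}^{k}(i-1) = \binom{k}{2}$ is precisely the number of edges of $K_k$, the smallest $k$-critical graph, so the argument cannot afford to waste edges.

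To produce a suitable $F$, the natural strategy is to fix a vertex $u$ of $H$ together with a proper $(k-1)$-coloring of $H - u$ (which exists by criticality), embed $T_k$ so that $u$ plays the role of a vertex with few ``chromatic obligations'' (for instance, the center of a star-like portion of $T_k$), and then show that a $(k-1)$-chromatic subgraph survives in $H - u$ after deleting the few edges of $F$ that lie there. Since $T_k$ contains only $k-1$ edges spread across $k$ vertices, removing them should only locally perturb the chromatic structure, giving reason to hope that the surviving subgraph still carries chromatic number $k-1$.

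The main obstacle is exactly this last step: there is no known theorem guaranteeing that after deleting the edges of an arbitrary spanning tree on some $k$-vertex subset of $H$, a $(k-1)$-chromatic subgraph always remains. The shape of $T_k$ could conceivably interact with the chromatic structure of $H$ in a destructive way, and it is this subtlety that keeps Conjecture \ref{chromatic-conj} open in full generality. The partial results of this paper circumvent the problem by restricting most of the trees to be stars, for which the embedding has enough flexibility to avoid damaging the chromatic skeleton of $G$.
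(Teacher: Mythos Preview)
The statement you are attempting is Conjecture~\ref{chromatic-conj}, which the paper explicitly leaves open; there is no proof in the paper to compare against. Your proposal is not a proof either, and to your credit you say so yourself: the crucial step --- that after embedding a copy $F$ of $T_k$ into a $k$-critical graph $H$, the graph $H - E(F)$ still contains a $(k-1)$-chromatic subgraph --- is unsupported, and is essentially a restatement of the conjecture. The heuristic that ``only $k-1$ edges are removed, so the chromatic structure should survive'' is precisely where the difficulty lies: deleting a single well-chosen edge from a $k$-critical graph already drops the chromatic number to $k-1$, and nothing in your sketch steers the embedding of $T_k$ away from such edges, nor controls which $(k-1)$-critical subgraph survives.

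It is also worth noting that the paper's proof of its partial result (Theorem~\ref{main-thm}, at most three non-stars) does \emph{not} follow your outline. Rather than embedding $T_k$ first and then hunting for a $(k-1)$-chromatic leftover, the paper fixes once and for all a Grundy-type $k$-coloring $A_1,\dots,A_k$ of a critical subgraph (each vertex in $A_i$ sees every earlier class), applies induction to the subgraph induced on the top $k-t$ classes $A_{k-t+1},\dots,A_k$, and then uses the reserved bottom classes $A_1,\dots,A_t$ to reattach the trimmed pieces of the non-stars and to host the deleted stars. The induction runs on the sequence of trees with the colored scaffold held fixed, not on the chromatic number of a residual graph. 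This sidesteps exactly the obstacle you identify, but it only works because the non-stars are few enough that all repairs can be routed through a bounded number of reserved classes.
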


The main result of the present paper concerns a special case of Conjecture \ref{chromatic-conj}.

\begin{thm}\label{main-thm}
For $2 \leq i \leq k$, let $T_i$ be a tree on $i$ vertices.
If $G$ is a $k$-chromatic graph and there are at most 3 non-stars among $T_2,\dots,T_k$, then they can be packed into $G$.
\end{thm}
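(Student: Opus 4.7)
The plan is to reduce to packing into a subgraph $H \subseteq G$ with minimum degree at least $k-1$. Such an $H$ exists: iteratively deleting vertices of degree less than $k-1$ from $G$ cannot exhaust the graph, for otherwise $G$ would admit a proper $(k-1)$-coloring obtained greedily in the reverse order of deletion. Hence $|V(H)| \geq k$, and it suffices to pack the trees into $H$. The main tool throughout will be the classical fact that any graph with $\delta \geq t-1$ contains every tree on $t$ vertices as a subgraph, proved by greedy leaf-by-leaf embedding.

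Let $T_a, T_b, T_c$ with $a < b < c$ denote the three non-star trees, and let the remaining trees be stars. I would pack the trees in decreasing order of size, fixing in advance $k-1$ distinct vertices $u_2, \ldots, u_k \in V(H)$ to serve as centers of the stars. When embedding a star $T_j$, I would use $j-1$ available edges incident to $u_j$; when embedding a non-star $T_i$ (for $i \in \{a,b,c\}$), I would place $T_i$ inside the subgraph $H \setminus U_{>i}$, where $U_{>i} := \{u_j : j > i \text{ and } T_j \text{ is a star}\}$. Since $|U_{>i}| \leq k-i$ and $\delta(H) \geq k-1$, the subgraph $H \setminus U_{>i}$ has minimum degree at least $i-1$, so the embedding exists; moreover it is automatically edge-disjoint from the already-placed larger stars, whose edges all touch $U_{>i}$.

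The main obstacle lies in the interaction between the three non-star embeddings and the star packings of smaller index. After embedding $T_c$, a vertex used by $T_c$ may have lost up to $\Delta(T_c) \leq c-2$ of its incident edges, and these losses can simultaneously jeopardize the subsequent embeddings of $T_b$ and $T_a$ and the guarantee that each star center $u_j$ with $j < c$ still has $j-1$ unused incident edges. To cope with this, I would select the embedding of $T_c$ so as to concentrate its vertex set as much as possible on the intermediate star centers $\{u_j : b < j < c\}$: when those vertices are removed in order to embed $T_b$, much of $T_c$'s edge-usage disappears along with them. An analogous bookkeeping, together with the bound $\Delta(T_i) \leq i-2$ available for every non-star $T_i$, propagates the argument down to the smallest non-star $T_a$. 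The hardest part is to verify simultaneously that each remaining star center $u_j$ retains $j-1$ available edges at the moment $T_j$ is packed, which will likely demand a case analysis on the relative sizes and structures of $T_a, T_b, T_c$.
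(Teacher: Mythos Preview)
Your reduction to a subgraph $H$ with $\delta(H)\ge k-1$ is fine, and so is the basic idea of reserving centers for the stars. The difficulty you flag at the end, however, is not a technicality that a case analysis will clean up; it is the entire content of the theorem, and your outlined mechanism does not handle it.

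Concretely, consider the situation the paper spends most of its effort on: the three non-stars are $T_k$, $T_{k-1}$, $T_{k-2}$. Then $U_{>k}=U_{>k-1}=U_{>k-2}=\emptyset$, so your scheme asks you to embed $T_{k-1}$ into $H\setminus E(T_k)$ and then $T_{k-2}$ into $H\setminus(E(T_k)\cup E(T_{k-1}))$. Since $\Delta(T_k)$ can be as large as $k-2$, after removing $E(T_k)$ a vertex of $H$ may have degree as small as $1$, so the greedy ``$\delta\ge t-1$'' embedding lemma gives nothing for $T_{k-1}$. Your proposed fix, concentrating $T_c$ on the intermediate star centers $\{u_j:b<j<c\}$, is vacuous here because that set is empty. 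The same obstruction prevents you from guaranteeing that the smaller star centers $u_j$ retain $j-1$ unused edges: a single high-degree vertex of $T_k$ placed on $u_j$ can consume almost all of them.

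The deeper issue is that after the first paragraph you use only the condition $\delta(H)\ge k-1$ and never again the $k$-chromaticity. If your argument went through it would in fact prove the special case of the paper's Conjecture~\ref{mindeg}, which is strictly stronger than the theorem; the authors leave that conjecture open. The paper's proof does not discard the chromatic structure: it takes a Grundy-type $k$-coloring $A_1,\dots,A_k$ in which every vertex of $A_i$ sees each earlier class, trims the large non-stars so that the resulting sequence of trees has sizes $2,\dots,k-t$, packs that sequence by induction into the $(k-t)$-chromatic subgraph on $A_{t+1}\cup\dots\cup A_k$, and then uses the guaranteed edges from any already-embedded vertex into the \emph{fresh} classes $A_1,\dots,A_t$ to reattach the trimmed pieces and to realise the deleted stars. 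It is precisely this ``one fresh neighbour in each new class'' property, unavailable from a minimum-degree hypothesis alone, that makes the reattachment and the subsequent star placements work, and it still requires a substantial case analysis on the shapes of the pending stars in $T_k$, $T_{k-1}$, $T_{k-2}$.
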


Note that Theorem \ref{main-thm} can be stated in a stronger way as the proof only requires $G$ to have a subgraph that has a Grundy $k$-coloring (see e.g. \cite{ChSe}) and minimum degree $k-1$.
The immediate corollary of Theorem \ref{main-thm} for complete graphs was proved by Roditty \cite{Ro}\footnote{This proof contains some errors which have recently been corrected by the author
\cite{Ro1}.}.

\begin{cor}\label{three-cor}
The TPC is true with the additional assumption that all but three of the trees are stars.
\end{cor}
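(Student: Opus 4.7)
The plan is to deduce this corollary directly from Theorem \ref{main-thm} by specializing the host graph. I set $k=n$ and take $G = K_n$, the complete graph on $n$ vertices. The only verification needed is that $K_n$ is $n$-chromatic: since every pair of vertices in $K_n$ is adjacent, any proper coloring must use $n$ distinct colors on them, so $\chi(K_n)=n$. With these choices, Theorem \ref{main-thm} applied to the family $T_2,\dots,T_n$ (containing at most three non-stars by hypothesis) produces an edge-disjoint packing of these trees into $K_n$, which is exactly the TPC under the additional assumption in the corollary.

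Since this is a one-step deduction, there is no substantive obstacle; the entire content of the corollary is already encoded in the theorem. If one prefers to appeal to the stronger form mentioned in the remark following Theorem \ref{main-thm}, the verification is equally immediate: $K_n$ trivially has minimum degree $n-1$, and ordering its vertices as $v_1,\dots,v_n$ and assigning $v_i$ the color $i$ yields a Grundy $n$-coloring (each $v_i$ sees every smaller color among its neighbors). Either route gives the corollary at once, so the work of proving the TPC with three non-star exceptions has been reduced entirely to the proof of Theorem \ref{main-thm}.
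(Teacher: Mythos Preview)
Your proposal is correct and matches the paper's treatment: the corollary is stated as an immediate consequence of Theorem~\ref{main-thm} applied to $G=K_n$, with no separate proof given.
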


\section{Proof of Theorem \ref{main-thm}}\label{thm-proof}

Before moving to the proof let us introduce some additional definitions.

Let $x$ be a vertex with exactly one neighbor $y$ of degree greater than $1$
and at least one neighbor of degree $1$.
The induced substar $R$ spanned by $x$ and its neighbors of degree $1$ is called a \emph{pending star}.
The vertex $y$ will be referred to as the \emph{neighbor of $R$}.
A \emph{spider} is a tree that has a vertex whose removal results in isolated vertices and edges (i.e.
a spider is a graph with a central vertex and some branches of
length 1 or 2).

\begin{proof}
The proof will be by induction on $k$, but the precise form of the induction
depends on the structure of the largest trees. For $k \le 3$ the statement of the theorem is trivial.
Now let us assume that the statement of the theorem holds for all
values less than $k$.

Without loss of generality we can assume $G$ is a vertex-critical $k$-chromatic graph.
Thus $G$ has minimum degree at least $k-1$.
Let us choose a $k$-coloring of $G$ with color classes $A_1, A_2,
\dots, A_k$ such that any vertex $x \in A_i$ has a neighbor in
each color class $A_1, A_2, \dots, A_{i-1}$. Let $G_i=G \setminus
(A_1 \cup A_2 \cup \dots \cup A_{k-i})$ be the induced subgraph of
$G$ on the color classes $A_{k},A_{k-1}, \dots, A_{k-i+1}$. Note
that $G_i$ has chromatic number $i$.

For simplicity we will use edge-coloring terminology.
A \emph{partial edge-coloring} of a graph $G$ is an assignment of colors to some of the edges of $G$.
(We will omit the word ``partial.'')
An edge that receives no color is referred to as \emph{uncolored}.

We will construct an edge-coloring of $G$ such
that the subgraph consisting of the edges of color $i$ is isomorphic to the tree $T_i$. Clearly this edge-coloring problem is equivalent to packing the trees into $G$.

The proof is divided into several claims and cases according to the structure of the trees in $T_2,\dots, T_k$.
In each case we remove parts from $t\leq 3$ non-stars and delete $t$ stars from $T_2,\dots, T_k$ such that we are left
with a sequence of trees of order $2,\dots, k-t$ containing at most three non-stars.
By induction we have a $(k-t-1)$-edge-coloring of $G_{k-t}$ such that each tree in the new sequence is isomorphic to a subgraph spanned by the edges of a single color.
To complete the desired edge-coloring of $G$ we have two steps. First we color a few more edges to finish the non-stars in the original sequence. Second we introduce $t$ new colors and color edges of $G$ to get the deleted stars.
Generally the (easy) details of the second step are left to the reader.

Throughout the proof if we remove some vertices of a tree $T_i$
we denote the remaining graph by $T_i'$. Note that although $T_i$ denotes
a tree with $i$ vertices, $T_i'$ will always have fewer than $i$ vertices.

Let $x$ be a vertex in the tree $T$. After the inductive step we have an isomorphism between $T$ and a (monochromatic) subgraph of $G$. For simplicity the image of $x$ in $G$ will also be called $x$.

\begin{claim}\label{c1}
If $T_k$ is a star and $k \geq 3$, then $T_2, \dots, T_k$ have a packing into $G$.
\end{claim}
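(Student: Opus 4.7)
The plan is to apply induction directly after removing the largest tree $T_k$, using the fact that vertices in the color class $A_1$ have all of their incident edges left untouched by the inductive packing.

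First I would set $t=1$, delete the star $T_k$ from the sequence, and observe that the remaining sequence $T_2, \dots, T_{k-1}$ consists of trees on $2, \dots, k-1$ vertices with at most three non-stars (the same non-stars as in the original sequence, since a star was removed). Since $G_{k-1}$ is a $(k-1)$-chromatic graph, by the induction hypothesis $T_2, \dots, T_{k-1}$ pack into $G_{k-1}$. Equivalently, we obtain a partial edge-coloring of $G_{k-1}$ with colors $2, \dots, k-1$ realizing this packing. Viewing this as a partial edge-coloring of $G$, all colored edges lie inside $G_{k-1} = G \setminus A_1$, so every edge of $G$ incident to a vertex of $A_1$ is still uncolored.

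Next I would introduce color $k$ and realize $T_k = K_{1,k-1}$. Pick any vertex $v \in A_1$. Since $G$ is vertex-critical $k$-chromatic, $\deg_G(v) \geq k-1$, and as noted above every edge at $v$ is uncolored. Choose any $k-1$ edges incident to $v$ and assign them color $k$; these edges span a copy of $K_{1,k-1} = T_k$ in $G$, completing the packing.

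There is no real obstacle in this case: the argument works because the Grundy-style ordering of color classes lets us peel off $A_1$ for induction, and the uncolored star centered at any $v \in A_1$ is automatically large enough to accommodate $T_k$. The more substantial work in the proof of Theorem~\ref{main-thm} will arise in the later claims, when the largest trees are non-stars and one must carefully remove and later re-attach pending stars or spider branches.
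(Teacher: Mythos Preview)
Your proof is correct and follows essentially the same approach as the paper: pack $T_2,\dots,T_{k-1}$ into $G_{k-1}$ by induction, then use a vertex of $A_1$ (whose incident edges are all uncolored and number at least $k-1$) as the center of the star $T_k$. The paper's argument is identical, just more terse.
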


\begin{proof}
By induction there is a ($k-2$)-edge-coloring of $G_{k-1}$ such that each tree $T_2, \dots, T_{k-1}$ is isomorphic to a subgraph spanned by the edges of a single color.
There is at least one vertex $a$ in
$A_1$ and its degree is at least $k-1$ in $G$.
Thus we can color $k-1$ edges incident to $a$ with a new color to complete the edge-coloring of $G$.
\end{proof}

\begin{claim}\label{c2}
If $T_{k-1}$ is a star $k \geq 3$, then $T_2, \dots, T_k$ have a packing into $G$.
\end{claim}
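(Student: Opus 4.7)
The plan is to reuse the template of Claim \ref{c1}, with one extra step to handle $T_k$ when it is not itself a star. I take $t=1$: delete the star $T_{k-1}$ from the sequence and remove any single leaf $x$ of $T_k$, letting $v$ be the unique neighbor of $x$ in $T_k$ and $T_k' := T_k - x$. The new sequence $T_2, \ldots, T_{k-2}, T_k'$ consists of trees of orders $2, \ldots, k-1$ and still contains at most three non-stars (deleting a star changes nothing, and removing a leaf from $T_k$ cannot create a new non-star). So the inductive hypothesis applied to the $(k-1)$-chromatic graph $G_{k-1}$ yields an edge-coloring in which each $T_i$ ($2 \le i \le k-2$) is realized in its own color and $T_k'$ is realized, say, in color $k$.

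It remains to extend $T_k'$ back to $T_k$ and to find room for the star $T_{k-1}$. The image of $v$ lies in $G_{k-1}$, so $v \in A_j$ for some $j \geq 2$, and by the Grundy property $v$ has a neighbor $a \in A_1$. Since $a \notin V(G_{k-1})$, the edge $va$ is not even an edge of $G_{k-1}$ and has not been touched; coloring it with color $k$ turns our copy of $T_k'$ into a copy of $T_k$, with $a$ playing the role of $x$.

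Finally, I use $a$ itself as the center of $T_{k-1}$. Because $G$ is vertex-critical, $\deg_G(a) \geq k-1$; and because every edge of $G$ incident to $a$ leaves $V(G_{k-1})$, none of them were used during the induction step. Only the single edge $va$ has been colored so far, so $a$ still has at least $k-2$ uncolored incident edges. Assigning any $k-2$ of these a fresh color $k-1$ produces the required star $T_{k-1}$ centered at $a$, edge-disjoint from everything already placed. I do not anticipate any real obstacle here; the only bookkeeping issue is the interaction between the single edge $va$ and the star at $a$, and this is resolved immediately by the degree count above.
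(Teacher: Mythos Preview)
Your proof is correct and follows essentially the same approach as the paper: remove a leaf from $T_k$, apply induction on $G_{k-1}$ to pack $T_2,\ldots,T_{k-2},T_k'$, extend $T_k'$ to $T_k$ via an edge to some $a\in A_1$, and then hang the star $T_{k-1}$ on $a$ using its remaining $\geq k-2$ uncolored incident edges. Your write-up is in fact slightly more careful than the paper's, as you explicitly note that the reduced sequence still has at most three non-stars (and the paper contains a typo, writing $T_{k-2}$ where it means $T_{k-1}$ in the final step).
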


\begin{proof}
Remove a leaf with neighbor $u$ from $T_k$
and let $T'_k$ be the resulting graph.
By induction there is a ($k-2$)-edge-coloring of $G_{k-1}$ such that each tree $T_2, \dots, T_{k-2},T'_k$ is isomorphic to a subgraph spanned by the edges of a single color.
We will call the color of $T_k'$ \emph{blue}.
The vertex $u$ has a neighbor $a \in A_1$. We color the edge $ua$ blue
to get a blue $T_k$.
The degree of $a$ is at least $k-1$ and $ua$ is the only colored edge incident to $a$. Thus $a$ has at least $k-2$ uncolored incident edges.
We color $k-2$ of these edges with a new color to get a monochromatic $T_{k-2}$.
This completes the edge-coloring of $G$.
\end{proof}

Note that Claim \ref{c2} implies the theorem for $k = 4$.

\begin{claim}\label{c3}
If $T_k$ and $T_{k-1}$ are not stars and $T_{k-2}$ and $T_{k-3}$ are
both stars and $k \geq 5$, then $T_2, \dots, T_k$ have a packing into $G$.
\end{claim}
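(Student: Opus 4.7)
The plan is to trim two leaves from each of $T_k$ and $T_{k-1}$, drop the two stars $T_{k-2}$ and $T_{k-3}$ from the sequence, apply the induction hypothesis inside $G_{k-2}$ to the reduced sequence of trees of sizes $2,\dots,k-2$, and finally close out the coloring by re-attaching the trimmed leaves via edges into $A_1\cup A_2$ and placing the two stars as monochromatic stars centered in $A_1$ and $A_2$.

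Since every tree on at least two vertices has at least two leaves, I would choose leaves $v_0,w_0$ of $T_k$ with parents $v_1,w_1$ (possibly $v_1=w_1$), and leaves $v'_0,w'_0$ of $T_{k-1}$ with parents $v'_1,w'_1$, and write $T'_k,T'_{k-1}$ for the resulting trees, which have $k-2$ and $k-3$ vertices respectively. The sequence $T_2,\dots,T_{k-4},T'_{k-1},T'_k$ has the required sizes and at most three non-stars (at most one among $T_2,\dots,T_{k-4}$ by the hypothesis of the theorem, and at most two among $T'_{k-1},T'_k$), so by induction we obtain a $(k-3)$-edge-coloring of $G_{k-2}$ realizing a packing of this sequence.

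To extend the coloring, first re-attach the trimmed leaves. Each of the images $v_1,w_1,v'_1,w'_1$ (using the convention that the image of a tree vertex in $G$ is named after it) lies in $A_3\cup\cdots\cup A_k$, so by the Grundy property each has a neighbor both in $A_1$ and in $A_2$. Attach the leaf at $v_1$ via an $A_1$-neighbor of $v_1$ and the leaf at $w_1$ via an $A_2$-neighbor of $w_1$, coloring both edges with $T'_k$'s color; the two new vertices lie in different classes, hence are distinct, and the monochromatic subgraph becomes a copy of $T_k$. Do the analogous thing for $T_{k-1}$ with its own color. Next, introduce two new colors for the stars. Pick any $a\in A_1$: it has degree at least $k-1$ in $G$ and is incident to at most two attachment edges, hence at least $k-3$ of its edges are uncolored, and color $k-3$ of them to realize $T_{k-2}$. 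Similarly pick any $b\in A_2$: after accounting for at most two attachment edges and possibly the newly colored edge $ab$, at least $k-4$ of its edges remain uncolored, which suffices for $T_{k-3}$.

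The main obstacle is the tight edge-budget at $a$ and $b$. The key design choice is to split each trimmed tree's two leaf attachments one-and-one between $A_1$ and $A_2$: this simultaneously ensures the two newly attached leaves of a tree are distinct even when the parents coincide, and guarantees that no vertex of $A_1$ or $A_2$ absorbs more than two attachment edges, which is exactly what the bounds $k-3$ and $k-4$ require.
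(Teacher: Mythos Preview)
Your approach is essentially the same as the paper's, and the counting at the end (at most two attachment edges into $A_1$ and at most two into $A_2$, giving the bounds $k-3$ and $k-4$) is correct. However, there is a genuine gap in the re-attachment step.

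You never check that the four attachment edges (two blue, two red) are pairwise distinct. You only argue that the two blue leaves are distinct from each other because one lands in $A_1$ and the other in $A_2$; you then say ``do the analogous thing for $T_{k-1}$.'' But if the image of $v_1$ in $G$ coincides with the image of $v'_1$, the Grundy property guarantees only \emph{one} neighbor of this common vertex in $A_1$, so the blue edge $v_1\to A_1$ and the red edge $v'_1\to A_1$ may be forced to be the same edge. (Concretely, take $G=K_k$: every vertex has exactly one neighbor in $A_1$, and $T'_k$ occupies all of $G_{k-2}$, so overlaps with $T'_{k-1}$ are unavoidable.) The same collision can happen on the $A_2$ side. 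Because you explicitly allow $v_1=w_1$ and $v'_1=w'_1$, you cannot in general swap the $A_1/A_2$ roles to avoid both collisions: in the extreme case all four parent-images equal a single vertex $p$, and every assignment tries to color the unique edge from $p$ to $A_1$ twice.

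The fix is exactly where the hypothesis ``$T_k$ and $T_{k-1}$ are not stars'' enters, which your argument never uses: choose the two removed leaves in $T_k$ to have \emph{distinct} parents $u\neq v$, and likewise $x\neq y$ in $T_{k-1}$. Then, after possibly swapping $x\leftrightarrow y$, you can assume $u\neq x$ and $v\neq y$, so the two edges into $A_1$ have distinct $G_{k-2}$-endpoints (hence are distinct even if the $A_1$-endpoint is shared), and similarly for $A_2$. With this adjustment your proof goes through and matches the paper's.
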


\begin{proof}
The trees $T_k$ and $T_{k-1}$ are not stars so we can
remove two leaves with neighbors $u$ and $v$ from $T_k$ and two
leaves with neighbors $x$ and $y$ from $T_{k-1}$ such that $u\neq
v$ and $x\neq y$ and let $T_{k}'$ and $T_{k-1}'$ be the remaining graphs.

By induction there is a ($k-3$)-edge-coloring of $G_{k-2}$ such that each tree $T_2, \dots, T_{k-4}, T_{k-1}',T_k'$ is isomorphic to a subgraph spanned by the edges of a single color. We will call the color of $T_k'$ and $T_{k-1}'$ \emph{blue} and \emph{red} respectively.

Without loss of generality we can
suppose that $u\neq x$ and $v\neq y$ in $G$. There is a neighbor
$a \in A_1$ of $u$, a neighbor $b \in A_2$ of $v$, a neighbor
$a' \in A_1$ of $x$ and a neighbor $b' \in A_2$ of $y$.

We color the edges $ua$ and $vb$ blue to get a blue $T_k$. We color the edges $xa'$ and
$yb'$ red to get a red $T_{k-1}$.
The vertex $a$ is incident to at least $k-3$
uncolored edges.
We color $k-3$ of these edges with a new color to get a monochromatic $T_{k-2}$.
Now the vertex $b$ is incident to at least $k-4$
uncolored edges. We color $k-4$ of these edges with another new color to get a monochromatic $T_{k-3}$.
This completes the edge-coloring of $G$.
\end{proof}

Note that Claim \ref{c3} implies the theorem for $k=5$. Furthermore,
the above three claims are essentially the same as the proof of
the first theorem in \cite{GyLe}.

\begin{claim}\label{c4}
If there is a pending star $R$ of order $r$ in $T_k$ and $T_{k-r}$ is a star,
then $T_2, \dots, T_k$ have a packing into $G$.
\end{claim}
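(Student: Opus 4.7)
The plan is to follow the general framework outlined at the start of the proof with $t=1$. I would remove the pending star $R$ (that is, the vertex $x$ together with its $r-1$ degree-one neighbors) from $T_k$ to produce a tree $T_k'$ on $k-r$ vertices, and then delete the star $T_{k-r}$ from the sequence. After reordering, the resulting sequence
\[
T_2, T_3, \dots, T_{k-r-1}, T_k', T_{k-r+1}, \dots, T_{k-1}
\]
consists of trees of orders $2, 3, \dots, k-1$ and still contains at most three non-stars (we removed a star and only possibly replaced a non-star by a smaller tree). The inductive hypothesis therefore applies to $G_{k-1}$ and yields a $(k-2)$-edge-coloring in which each color class realizes the corresponding tree. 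Let \emph{blue} denote the color used for $T_k'$.

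Next I would extend the blue $T_k'$ to a blue $T_k$. Let $y$ be the neighbor of $R$ in $T_k$; after the inductive step, $y$ corresponds to a vertex of $G_{k-1}$ lying in some class $A_i$ with $i\geq 2$. By the defining property of our $k$-coloring, $y$ has a neighbor $a\in A_1$, and the edge $ya$ is uncolored since no edges incident to $A_1$ were touched during the induction. Color $ya$ blue so that $a$ plays the role of the center $x$ of $R$. The vertex $a$ has degree at least $k-1$ in $G$, and at most $k-r$ of its neighbors can lie inside $V(T_k')$; hence at least $r-1$ neighbors of $a$ lie outside $V(T_k')$, and the edges from $a$ to them are still uncolored. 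Coloring $r-1$ such edges blue supplies the $r-1$ leaves of $R$ and completes a blue copy of $T_k$.

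Finally, I would use $a$ itself as the center of the new star $T_{k-r}$. After the previous step exactly $r$ edges at $a$ have been colored blue, leaving at least $(k-1)-r = k-r-1$ uncolored edges incident to $a$. Coloring any $k-r-1$ of these with a fresh color produces the desired monochromatic $T_{k-r}$ and completes the edge-coloring of $G$. The only delicate point is the degree accounting at $a$: the total demand is $1+(r-1)+(k-r-1)=k-1$, which matches the minimum-degree guarantee exactly. This tight balance is both the reason the sizes of the pending star and of the removed star must be linked by $r+(k-r)=k$ and the main obstacle to overcome in the argument.
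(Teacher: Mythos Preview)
Your proof is correct and follows essentially the same route as the paper's own argument: remove the pending star $R$ to obtain $T_k'$ of order $k-r$, apply induction on $G_{k-1}$ with $T_k'$ in place of $T_{k-r}$, then use a neighbor $a\in A_1$ of the attachment vertex to rebuild $R$ in blue and finally use the remaining uncolored edges at $a$ for the star $T_{k-r}$. The degree accounting you give (at most $k-r$ neighbors of $a$ lie in $V(T_k')$, hence at least $r-1$ lie outside; after coloring $r$ blue edges at $a$ at least $k-r-1$ uncolored edges remain) matches the paper's count exactly.
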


\begin{proof}
Let $u$ be the neighbor of $R$.
Remove $R$ from $T_k$ and let $T_k'$ be the
remaining graph.

By induction there is a ($k-2$)-edge-coloring of $G_{k-1}$ such that each tree $T_2, \dots, T_{k-r-1}, T_k',
T_{k-r+1}, \dots, T_{k-1}$ is isomorphic to a subgraph spanned by the edges of a single color. We will call the color of $T_k'$ \emph{blue}.

There is a neighbor $a \in A_1$
of $u$ and $a$ has at least $k-2$ other neighbors in $G$.
Then there are at least $r-1$ vertices $d_1, \dots, d_{r-1}$ which
are neighbors of $a$ but are not in $T_k'$ i.e. there are no blue edges incident to $d_1,\dots, d_{r-1}$.
We color the edges $ua$
and $ad_1,\dots, ad_{r-1}$ blue to get a blue $T_k$.
Now the vertex $a$ is incident to at least $k-r-1$ uncolored edges.
We color $k-r-1$ of these edges with a new color to get a monochromatic $T_{k-r}$.
This completes the edge-coloring of $G$.
\end{proof}


\begin{claim}
For $2 \leq i \leq k \leq 6$, let $T_i$ be a tree on $i$ vertices.
If $G$ is a $k$-chromatic graph, then $T_2,\dots,T_k$ can be packed into $G$.
\end{claim}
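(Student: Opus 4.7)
The plan is to argue by strong induction on $k$. For $k\le 5$ the claim follows quickly from Claims~\ref{c1}--\ref{c3} after a brief case split on which of the top trees are stars, using that $T_2$ and $T_3$ are always stars: for $k=4$ apply Claim~\ref{c2} with $T_{k-1}=T_3$, and for $k=5$ apply Claim~\ref{c1} (if $T_5$ is a star), Claim~\ref{c2} (if only $T_4$ is a star), or Claim~\ref{c3} (if both $T_4,T_5$ are non-stars, noting $T_2,T_3$ are stars). The substance of the proof is therefore the case $k=6$.

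For $k=6$ I would first dispose of the easy subcases with the existing claims: Claim~\ref{c1} if $T_6$ is a star, Claim~\ref{c2} if $T_5$ is a star but $T_6$ is not, and Claim~\ref{c3} if $T_5,T_6$ are non-stars and $T_4=K_{1,3}$ (so that $T_3,T_4$ are both stars). Otherwise $T_4=P_4$ and all of $T_4,T_5,T_6$ are non-stars. Since $T_6$ is a non-star on six vertices it has a pending star of some order $r\in\{2,3,4\}$; if $r\ge 3$ then $T_{6-r}\in\{T_3,T_2\}$ is a star and Claim~\ref{c4} finishes. The only remaining situation is when every pending star of $T_6$ has order $2$, and a direct check of the six non-isomorphic trees on six vertices shows that this forces $T_6$ to be either $P_6$ or the spider with a central vertex of degree $3$, one pendant leaf, and two pendant paths of length $2$.

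For this final case I would use a reduction with $t=2$: delete the two stars $T_2$ and $T_3$, and shrink both large non-stars. Let $T_5'=T_5\setminus R$, where $R$ is a pending star of order $2$ in $T_5$, so $T_5'\cong P_3$ has order $3$; and let $T_6'=T_6\setminus W$, where $W$ is a pending connected subtree on $4$ vertices chosen so that $T_6'\cong K_2$ (take $W$ to be a pending $P_4$ at one end of $P_6$, or the pendant leaf together with one full arm of length $2$ in the spider case). The reduced sequence $T_6',T_5',T_4$ has orders $2,3,4$ and only one non-star ($T_4$), so by the induction hypothesis at $k'=4$ it packs into the induced $4$-chromatic subgraph $G_4=G\setminus(A_1\cup A_2)$. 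To extend the packing to $G$, I would use a vertex $a_1\in A_1$ and a vertex $a_2\in A_2$ with $a_1a_2$ an edge (which can be arranged via the Grundy property). Each has degree at least $k-1=5$ and no edge incident to either has been colored by the inductive step, so together they supply at least $9$ uncolored edges. Exactly nine new edges are needed --- four to reconstruct $W$ on top of $T_6'$, two to reconstruct $R$ on top of $T_5'$, two for $T_3$ colored as a new copy of $P_3$, and one for $T_2$ --- so the count matches.

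The hard part will be verifying that this tight extension can actually be realized, i.e.\ that the nine required edges can simultaneously be embedded as edges at $\{a_1,a_2\}$. The worst case is the extremal graph $G=K_6$, where $G_4=K_4$ is packed exactly by $\{T_4,T_5',T_6'\}$ and the nine edges at $\{a_1,a_2\}$ are the only remaining resource. Here one checks directly: each of the four pieces to be added ($W$, $R$, a new $P_3$, a new $K_2$) admits an embedding whose new edges have vertex cover contained in $\{a_1,a_2\}$, and the combined assignment of new leaves and interior vertices to the remaining vertices of $G_4\setminus V(T_6')$ is feasible thanks to the flexibility in choosing $R$, $W$, and the inductive embedding.
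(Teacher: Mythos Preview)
Your treatment of $k\le 5$ and of the easy $k=6$ subcases (via Claims~\ref{c1}--\ref{c4}) matches the paper. The divergence is in the residual case $T_4=P_4$, $T_5$ non-star, $T_6\in\{P_6,\text{spider}\}$: the paper removes only \emph{one} class (reduces to $G_5$), shrinking $T_6$ by a pending $P_2$ to a $P_4$ and $T_4$ by a leaf to a $P_3$, then packs $T_2,T_4',T_6',T_5$ into $G_5$ and repairs $T_6$ and $T_4$ with single hops into $A_1$. You instead drop two classes (reduce to $G_4$), shrink $T_6$ by a four-vertex piece $W$ and $T_5$ by a pending $P_2$, and try to rebuild everything using the edges at a fixed adjacent pair $a_1\in A_1,\ a_2\in A_2$.

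The gap is in the extension step. Your assertion that ``the worst case is $G=K_6$'' conflates edge-count tightness with structural difficulty; in $K_6$ every adjacency you need is automatically present, so it is in fact the \emph{easiest} instance for your argument. In a general critical $6$-chromatic graph the Grundy property only guarantees that a vertex of $G_4$ has \emph{some} neighbour in $A_1$ and \emph{some} neighbour in $A_2$, not that these are your pre-chosen $a_1,a_2$; and $a_1\in A_1$ need not have any neighbour in $A_2$ at all, so insisting on $a_1a_2\in E(G)$ may force $a_1$ to be non-adjacent to the attachment vertex $p\in V(T_6')$. Concretely, for $T_6=P_6$ you must embed a $P_5$ of the form $p\text{--}w_1\text{--}w_2\text{--}w_3\text{--}w_4$ using only edges incident to $\{a_1,a_2\}$; any size-two vertex cover of this $P_5$ forces either $p$ to be adjacent to one of $a_1,a_2$, or $a_1,a_2$ to share a common neighbour $w_2\notin\{p,q\}$, and neither is guaranteed simultaneously with $a_1a_2\in E(G)$. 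The paper's one-level reduction sidesteps this entirely: each repair is a single edge from a designated vertex of $G_5$ into $A_1$ (plus one further edge from that $A_1$-vertex), and such edges exist by Grundy and minimum degree alone.
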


\begin{proof}
By the above claims, the only remaining case is when $k=6$ and none of $T_6, T_5, T_4$ are stars. It is easy to see that $T_2, T_3, T_4$ are unique (they are all paths) and $T_5$ and $T_6$ each have two possible configurations (either a path or a spider).

Remove a pending star of order $2$ from $T_6$ and a leaf from $T_4$ and let $T_6'$ and $T_4'$ be the remaining graphs. Note that both of these remaining graphs are paths.
We can reconstruct $T_4$ by adding an edge to either endpoint of $T_4'$. Similarly, we can reconstruct $T_6$ by adding a pending star of order $2$ to either endpoint (if $T_6$ is a path) or to either interior point (if $T_6$ is a spider).

Because the statement of the claim holds for $k=5$ there is a $4$-edge-coloring of $G_{5}$ such that each tree $T_2, T_4', T_6',T_5$ is isomorphic to a subgraph spanned by the edges of a single color. We will call the color of $T_6'$ and $T_{4}'$ \emph{blue} and \emph{red} respectively.

First we consider the case when $T_6$ is a spider (if $T_6$ is a path, then the argument below works if we replace ``interior point'' with ``endpoint'' everywhere). We distinguish two subcases.

\vspace{.2cm}
{\bf Case A.} The two endpoints $u$ and $v$ of $T_4'$ are equal to the two interior points of $T_6'$ in $G_5$.
\vspace{.2cm}

The vertex $u$ has a neighbor $a \in A_1$ and $a$ has a neighbor $d_1 \in G$ which is not in $T_6'$. We color $ua$ and $ad_1$ blue to get a monochromatic $T_6$. The vertex $v$ has a neighbor $b \in A_1$ (note that $b$ and $a$ can be the same vertex, but still the edge $vb$ is uncolored). We color $vb$ red to get a monochromatics $T_4$. Now there are at least two uncolored edges incident to $a$. We can color them with a new color to get a monochromatic $T_3$ to complete the edge-coloring of $G$.

\vspace{.2cm}
{\bf Case B.} There is an interior point $u$ of $T_6'$ which is not an endpoint of $T_4'$.
\vspace{.2cm}

The vertex $u$ has a neighbor $a \in A_1$ and $a$ has a neighbor $d_1 \in G$ which is not in $T_6'$. We color $ua$ and $ad_1$ blue to get a monochromatic $T_6$. One of the endpoints $v$ of $T_4'$ is not equal to $d_1$. The vertex $v$ has a neighbor $b \in A_1$ (note that $b$ and $a$ can be the same vertex, but still the edge $vb$ is uncolored). We color $vb$ red to get a monochromatics $T_4$. Now there are at least two uncolored edges incident to $a$. We can color them with a new color to get a monochromatic $T_3$ to complete the edge-coloring of $G$.
\end{proof}

From now on we can suppose that none of the conditions of the above five claims hold.
In particular, $k > 6$ and $T_k$, $T_{k-1}$ plus exactly one of
$T_{k-2}$ and $T_{k-3}$ are not stars. Thus all other trees are
stars. Furthermore, all the pending stars in
$T_k$ have order $2$ (in the case $T_{k-2}$
is not a star) or order $3$ (in the case $T_{k-3}$ is not a star).

We now distinguish two cases and several subcases.

\vspace{.2cm}
{\bf Case 1.} Every pending star in $T_k$ is of order $3$ (i.e. the case $T_{k-3}$ is not a star).
\vspace{.2cm}

Let $R$ be a pending star of order $3$ in $T_k$ with neighbor $u$ and let
$v$ be the neighbor of a leaf such that $u \neq v$ and $v$ is not in $R$ (such a leaf can be easily found as $k>6$).
Let $x$ be the neighbor of a leaf in $T_{k-1}$.
Remove $R$ and a leaf which is a neighbor of $v$ from $T_k$ and let $T_k'$ be the
remaining graph. Remove a leaf which is a neighbor of $x$ from $T_{k-1}$ and let
$T_{k-1}'$ be the remaining graph.

By induction there is a ($k-3$)-edge-coloring of $G_{k-2}$ such that each tree $T_2, \dots, T_{k-5}, T_{k}',T_{k-3},T_{k-1}'$ is isomorphic to a subgraph spanned by the edges of a single color. We will call the color of $T_k'$ and $T_{k-1}'$ \emph{blue} and \emph{red} respectively.

The vertex $v$ has a neighbor $a \in A_1$ and $u$ has a
neighbor $b \in A_2$. There are at least $k-2$ neighbors of $b$
which are different from $a$. There are $k-4$ vertices in $T_k'$, hence there are at least two vertices
$d_1$ and $d_2$ adjacent to $b$ that are not in $T_k'$ and not equal to $a$.

The edges
$va$, $ub$, $bd_1$ and $bd_2$ are colored blue to get a blue $T_k$.
There are at least $k-2$ uncolored edges incident to $a$ and $k-4$ uncolored edges incident to $b$.
Although $x$ could coincide with $u$ or $v$, in any case there is at least one uncolored edge between $x$ and a vertex in $A_1$ or $A_2$.
We color this edge red to get a red $T_{k-1}$.
Then $a$ and $b$ have either
at least $k-3$ and $k-4$ or at least $k-2$ and $k-5$ uncolored
incident edges.
In either case, it is easy to see that we can color edges incident to $a$ or $b$ with two new colors to get $T_{k-2}$ and $T_{k-4}$ to complete the edge-coloring of $G$.

\vspace{.2cm}
{\bf Case 2.} Every pending star in $T_k$ is of order $2$ (i.e. the case $T_{k-2}$ is not a star).
\vspace{.2cm}

{\bf Case 2.1.} $T_k$ is not a spider.
\vspace{.2cm}

Let $R_1$ and $R_2$ be pending stars in $T_k$ of order $2$ with neighbors $u$ and $v$ such that $u \neq v$.
Let $x \neq y$ be neighbors of leaves in $T_{k-1}$.
Remove $R_1$ and $R_2$ from $T_k$ and let $T_k'$ be the remaining graph. Remove a leaf with neighbor $x$ and
a leaf with neighbor $y$ from $T_{k-1}$ and let $T_{k-1}'$ be the remaining graph.

By induction there is a ($k-3$)-edge-coloring of $G_{k-2}$ such that each tree $T_2, \dots, T_{k-5}, T_k', T_{k-1}',T_{k-2}$ is isomorphic to a subgraph spanned by the edges of a single color. We will call the color of $T_k'$ and $T_{k-1}'$ \emph{blue} and \emph{red} respectively.

Without loss of generality we
can suppose $u\neq x$ and $v\neq y$. There is a neighbor $a \in A_1$ of
$u$ and a neighbor $b \in A_2$ of $v$. There are at
least two vertices adjacent to $a$ and at least two vertices adjacent to
$b$ which are not in $T_k'$ and are different from $a$ and $b$. Thus we can find two vertices $d_1, d_2 \not \in T_k'$
such that $d_1$ is adjacent to $a$  and $d_2$ is
adjacent to $b$ and either $d_1\neq x$ and $d_2\neq y$ or $d_1=x$ and $d_2=y$.

Then the edges $ua$, $ad_1$, $vb$, $bd_2$ are colored blue to get a blue $T_k$.
Now there is an uncolored edge between $x$ and $A_1 \cup A_2$ and an uncolored edge between $y$ and
$A_1 \cup A_2$. Color these two edges red to get a red $T_{k-1}$
Now $a$ is incident to at least $k-4$
uncolored edges. We color $k-4$ of these edges with a new color to get a monochromatic $T_{k-3}$.
Now $b$ is incident to at least $k-5$
uncolored edges.
We color $k-5$ of these edges with another new color to get a monochromatic $T_{k-4}$.
This completes the edge-coloring of $G$.

\vspace{.2cm}
{\bf Case 2.2.} $T_k$ is a spider.
As $k > 6$, we can suppose that there exist three distinct vertices $u_1,u_2,u_3$ in $T_k$ each with at least one neighbor that is a leaf.
\vspace{.2cm}

{\bf Case 2.2.1.}  $T_{k-1}$ has a pending star $R$ of order $r\ge 4$.
\vspace{.2cm}

Let $x$ be the neighbor of $R$.
Let $w \neq z$ be neighbors of leaves in $T_{k-2}$.
Remove a neighboring leaf from each vertex $u_1,u_2,u_3$ in $T_k$ and let $T_k'$ be the remaining graph.
Remove the pending star $R$ from $T_{k-1}$ and let $T_{k-1}'$ be the remaining graph.
Remove a leaf with neighbor $w$ and
a leaf with neighbor $z$ from $T_{k-2}$ and let $T_{k-2}'$ be the remaining graph.

By induction there is a ($k-4$)-edge-coloring of $G_{k-3}$ such that each tree
$T_2, \dots, T_{k-r-2}, T_{k-1}',T_{k-r},\dots, T_{k-5}, T_{k-2}', T_k'$ is isomorphic to a subgraph spanned by the edges of a single color. We will call the color of $T_k'$, $T_{k-1}'$ and $T_{k-2}'$ \emph{blue}, \emph{red} and \emph{green} respectively.

The vertex $x$ has a neighbor
$a \in A_1$. There are at least $k-1$ neighbors of $a$ and
at least $k-1-(k-1-r)=r$ of them, say $d_1, \dots, d_{r}$ are not in
$T_{k-1}'$. If any of them is equal to $u_1,u_2,u_3$ then without loss of generality we can assume that
$d_{r}$ is equal to $u_3$ (other equalities are also possible).
If not, we still can suppose without loss of generality that $u_3\neq x$.
We color the edges $xa, ad_1, \dots, ad_{r-1}$ red to get a red $T_{k-1}$.

There is a neighbor $b \in A_2$ of $u_1$ and a neighbor $c \in A_3$ of
$u_2$ and there is an uncolored edge between $u_3$ and $A_1$.
Now we color the edges $u_1b,u_2c$ and the uncolored edge between $u_3$ and $A_1$ with color blue to get a blue
$T_k$.
It is easy to see that we can color either an edge between $w$ and
$A_2$ and an edge between $z$ and $A_3$, or an edge between $w$
and $A_3$ and an edge between $z$ and $A_2$ with color green to get a green $T_{k-2}$.

Now $a$ is incident to at least $k-r-2$ uncolored edges, so we can color edges incident to $a$ with a new color to get $T_{k-r-1}$. After this, $b$ and $c$ both are still incident to at least
$k-4$ uncolored edges (note that the edge $ba$ and $ca$ may be colored red or with the new color corresponding to $T_{k-1}$). It is easy to see that we can color edges incident to $b$ and $c$ with two new colors to get $T_{k-3}$ and $T_{k-4}$
to complete the edge-coloring of $G$.

\vspace{.2cm}
{\bf Case 2.2.2.}  $T_{k-1}$ has a pending star $R$ of order $3$.
\vspace{.2cm}

{\bf Case 2.2.2.1.} $T_{k-2}$ has a pending star $R'$ of order $r\ge 3$.
\vspace{.2cm}

Let $x$ be the neighbor of $R$.
Let $w$ be the neighbor of $R'$.
Remove a neighboring leaf from each vertex $u_1,u_2,u_3$ in $T_k$ and let $T_k'$ be the remaining graph.
Remove the pending star $R$ from $T_{k-1}$ and let $T_{k-1}'$ be the remaining graph.
Remove the pending star $R'$ from $T_{k-2}$ and let $T_{k-2}'$ be the remaining graph.

By induction there is a ($k-4$)-edge-coloring of $G_{k-3}$ such that each tree
$T_2, \dots, T_{k-r-3}, T_{k-2}',T_{k-r-1},\dots, T_{k-5}, T_{k-1}', T_k'$ is isomorphic to a subgraph spanned by the edges of a single color. We will call the color of $T_k'$, $T_{k-1}'$ and $T_{k-2}$ \emph{blue}, \emph{red} and \emph{green} respectively.

Without loss of generality we can suppose
$x\neq u_3$ and $w\neq u_2$. Then $x$ has a neighbor $c$ in
$A_3$. We color the edge $xc$, an edge between $c$ and $A_1$ and an
edge between $c$ and $A_2$ with color red to get a red $T_{k-1}$.
There is a neighbor $a \in A_1$
of $w$. The vertex $a$ has at least $k-2$ neighbors different
from $c$, at least $r$ of them, say $d_1, \dots, d_r$ are not in
$T_{k-2}'$.  If any $d_i$ is equal to $u_1,u_2,u_3$ then let
$d_{r}$ be equal to $u_2$ (other equalities are also possible).
We color the edges $wa$, $ad_1,\dots, ad_{r-1}$ green to get a green
$T_{k-2}$.

There is an uncolored edge between $u_3$ and $A_3$, an uncolored edge
between $u_2$ and $A_1$ and an uncolored edge between $u_1$ and $A_2$.
We color these edges blue to get a blue $T_{k}$.
It is easy to see that there are enough uncolored edges incident to $a$, $b$ and $c$ such that we can complete the edge-coloring of $G$ with three new colors to get $T_{k-r-2}$, $T_{k-4}$ and $T_{k-3}$.

\vspace{.2cm}
{\bf Case 2.2.2.2.} All pending stars in $T_{k-2}$ are of order $2$.
\vspace{.2cm}

Let $x$ be the neighbor of $R$. Let $R'$ be a pending star of order $2$ in $T_{k-2}$. Let $w$ be the neighbor of $R'$.
As $k > 6$, there is a leaf in $T_{k-2}$ with neighbor $z \neq w$.
Remove a neighboring leaf from each vertex $u_1,u_2,u_3$ in $T_k$ and let $T_k'$ be the remaining graph.
Remove the pending star $R$ from $T_{k-1}$ and let $T_{k-1}'$ be the remaining graph.
Remove the pending star $R'$ and a leaf with neighbor $z$ from $T_{k-2}$ and let $T_{k-2}'$ be the remaining graph.

By induction there is a ($k-4$)-edge-coloring of $G_{k-3}$ such that each tree
$T_2, \dots,T_{k-6}, T_{k-2}', T_{k-1}', T_k'$ is isomorphic to a subgraph spanned by the edges of a single color. We will call the color of $T_k'$, $T_{k-1}'$ and $T_{k-2}$ \emph{blue}, \emph{red} and \emph{green} respectively.

Without loss of generality we can suppose that $x \neq u_3$,
$w\neq u_2$ and $z\neq u_1$. Then $x$ has a neighbor $c \in A_3$. We
color the edge $xc$, an edge between $c$ and $A_1$ and an edge
between $c$ and $A_2$ with color red to get a red $T_{k-1}$.
There is a neighbor $a \in A_1$ of
$w$.
There is a neighbor $b \in A_2$ of $z$.
The vertex $a$ has at least $k-3$ neighbors different from
$c$ and $b$, at least two of them, $d_1$ and $d_2$, are not in
$T_{k-2}'$. Without loss of generality we can suppose that $u_2\ne d_1$.
We color the edges $wa$, $ad_1$ and $zb$ green to get a green $T_{k-2}$.
There is an edge between $u_3$ and $A_3$, an edge
between $u_2$ and $A_1$ and an edge between $u_1$ and $A_2$.
These edges are uncolored.
We color these edges blue to get a blue $T_k$.

It is easy to see that there are enough uncolored edges incident to $a$, $b$ and $c$ such that we can complete the edge-coloring of $G$ with three new colors to get $T_{k-5}$, $T_{k-4}$ and $T_{k-3}$.

\vspace{.2cm}
{\bf Case 2.2.3.} Every pending star in $T_{k-1}$ is of order $2$.
\vspace{.2cm}

{\bf Case 2.2.3.1.} $T_{k-1}$ is not a spider.
\vspace{.2cm}

Let $R$ and $R'$ be pending stars in $T_{k-1}$ of order $2$ with neighbors $x$ and $y$ such that $x \neq y$.
Let $w \neq z$ be neighbors of leaves in $T_{k-2}$.
Remove a neighboring leaf from each vertex $u_1,u_2,u_3$ in $T_k$ and let $T_k'$ be the remaining graph.
Remove $R$ and $R'$ from $T_{k-1}$ and let $T_{k-1}'$ be the remaining graph. Remove a leaf with neighbor $z$ and
a leaf with neighbor $w$ from $T_{k-2}$ and let $T_{k-2}'$ be the remaining graph.

By induction there is a ($k-4$)-edge-coloring of $G_{k-3}$ such that each tree
$T_2, \dots,T_{k-6}, T_{k-1}', T_{k-2}', T_k'$ is isomorphic to a subgraph spanned by the edges of a single color. We will call the color of $T_k'$, $T_{k-1}'$ and $T_{k-2}'$ \emph{blue}, \emph{red} and \emph{green} respectively.

There is a neighbor
$a \in A_1$ of $x$ and a neighbor $b \in A_2$ of $y$. There are
at least three neighbors $d_1,d_2,d_3$ of $a$ not in $T_{k-1}'$
and other than $b$ and at least three neighbors $f_1,f_2,f_3$ of
$b$ not in $T_{k-1}'$ and other than $a$. If there is a $u_i=d_j$
and/or $u_i=f_l$, we can suppose $j=3$ and/or $l=3$. Also we can suppose
$d_1\neq f_1$. We color the edges $xa$, $ad_1$, $yb$ and $bf_1$ red
to get a red $T_{k-1}$.

There are at most two of $u_1$, $u_2$ and $u_3$ equal to some of $x$, $y$,
$d_1$ and $f_1$, moreover, we can suppose that they are not $u_1$ and
$u_2$. Then there is a neighbor $c \in A_3$ of $u_3$. We color the
edge $u_3c$ blue. At most one of $u_1$ and $u_2$, say $u_1$
is connected by a red edge to a vertex in $A_1$ or $A_2$, say $A_1$,
then we color an edge between $u_1$ and $A_2$ and an edge between $u_2$ and $A_1$ with color blue to get a blue $T_k$.
Any vertex in $G_{k-3}$ is connected by a colored edge to at most two
of $A_1$, $A_2$ or $A_3$ and there are no two distinct vertices in $G_{k-3}$
connected by colored edges to the same two of $A_1$, $A_2$ or $A_3$.
Thus we can find an uncolored edge from $w$ to one class $A_1, A_2, A_3$ and an uncolored edge from
$z$ to a different class $A_1, A_2, A_3$. We color these two edges green to get a green $T_{k-2}$.

There are at least $k-5$, $k-4$ and $k-3$ or at least $k-5$, $k-5$ and $k-2$ uncolored
edges incident to $a$, $b$ and $c$ respectively.
It is easy to see that we can complete the edge-coloring of $G$ with three new colors to get $T_{k-5}$, $T_{k-4}$ and $T_{k-3}$.

\vspace{.2cm}
{\bf Case 2.2.3.2.} $T_{k-1}$ is a spider.
As $k> 6$, there exist three distinct vertices $x_1,x_2,x_3$ in $T_{k-1}$ each with at least one neighbor that is a leaf.
\vspace{.2cm}

{\bf Case 2.2.3.2.1.} $T_{k-2}$ has a pending star $R$ of order $r \ge 3$.
\vspace{.2cm}

Let $w$ be the neighbor of $R$.
Remove a neighboring leaf from each vertex $u_1,u_2,u_3$ in $T_k$ and let $T_k'$ be the remaining graph.
Remove a neighboring leaf from each vertex $x_1,x_2,x_3$ in $T_{k-1}$ and let $T_{k-1}'$ be the remaining graph.
Remove the pending star $R$ from $T_{k-2}$ and let $T_{k-2}'$ be the remaining graph.

By induction there is a ($k-4$)-edge-coloring of $G_{k-3}$ such that each tree
$T_2, \dots, T_{k-r-3}, T_{k-2}',T_{k-r-1},\dots, T_{k-5}, T_{k-1}', T_k'$ is isomorphic to a subgraph spanned by the edges of a single color. We will call the color of $T_k'$, $T_{k-1}'$ and $T_{k-2}'$ \emph{blue}, \emph{red} and \emph{green} respectively.

Without loss of generality we can suppose that $u_3, w, x_2$ are pairwise distinct.
There is a neighbor $a$ of $w$ in
$A_1$. There are at least $r+1$ neighbors $d_1, d_2 \dots,d_{r+1}$ of
$a$ not in $T_{k-2}'$.
If there is $u_i=d_j$, then we
can suppose $d_{r+1}=u_3$.  If $x_l=d_{r+1}$ also, then
we can suppose $l=3$. If one of $d_1, d_2,\dots, d_r$ is equal to some
$x_p$, then we can suppose $d_r=x_2$.

Now we color the edges $wa, ad_1, ad_2,\dots, d_{r-1}$ green to get a green $T_{k-2}$.
There is a neighbor $b \in A_2$ of $u_1$ and a neighbor $c \in A_3$ of
$u_2$. We color the edges $u_3a$, $u_1b$ and $u_2c$ blue to get a blue
$T_k$.
We color the edge $x_2a$ red. Now there is only one
colored edge in $G$ incident to $b$ and only one colored edge in $G$ incident to $c$. Furthermore, these two
colored edges are not incident.
Hence we can color either the
edges $x_1b$ and $x_3c$ or the edges $x_1c$ and $x_3b$ with color red to
get a red $T_{k-1}$.

Now there are at least $k-3$ uncolored edges incident to $b$, $k-3$
uncolored edges incident to $c$ and $k-r-3$ uncolored edges incident to $a$.
It is easy to see that we can complete the edge-coloring of $G$ with three new colors to get $T_{k-r-2}$, $T_{k-4}$ and $T_{k-3}$.

\vspace{.2cm}
{\bf Case 2.2.3.2.2.} Every pending star in $T_{k-2}$ is of order $2$.
\vspace{.2cm}

Let $w$ be the neighbor of a pending star $R$ in $T_{k-2}$ of order $2$.
Let $z \neq w$ be a neighbor of a leaf in $V(T_{k-2}) \setminus R$.
Remove a neighboring leaf from each vertex $u_1,u_2,u_3$ in $T_k$ and let $T_k'$ be the remaining graph.
Remove a neighboring leaf from each vertex $x_1,x_2,x_3$ in $T_{k-1}$ and let $T_{k-1}'$ be the remaining graph.
Remove the pending star $R$ and a leaf with neighbor $z$ from $T_{k-2}$ and let $T_{k-2}'$ be the remaining graph.

By induction there is a ($k-4$)-edge-coloring of $G_{k-3}$ such that each tree
$T_2, \dots,T_{k-6}, T_{k-2}', T_{k-1}', T_k'$ is isomorphic to a subgraph spanned by the edges of a single color. We will call the color of $T_k'$, $T_{k-1}'$ and $T_{k-2}'$ \emph{blue}, \emph{red} and \emph{green} respectively.

Without loss of generality we can suppose that $x_1 \neq w$.
There is a neighbor
$a \in A_1$ of $z$ and a neighbor $b \in A_2$ of $w$. There are
at least three neighbors $d_1,d_2,d_3$ of $b$ which are not in
$T_{k-2}'$ and are different from $a$. We can suppose that if $z$ is equal to some $u_i$ and/or
some $x_j$, then $z=u_1$ and/or $z=x_2$. We also can suppose that
$u_1$, $x_1$ and $d_1$ are pairwise distinct.

We color the edges $za$, $wb$ and $bd_1$ green to get a green $T_{k-2}$. Then we color
an edge between $u_1$ and $A_2$ blue. We color an edge between
$x_1$ and $A_2$ red. Now none of $u_2$, $u_3$, $x_3$ has a colored edge
incident to $A_1$ or $A_3$, but it is possible that there is a colored edge between $x_2$ and $a$.
We can color an edge from $x_2$ to $A_3$ and an edge from $x_3$ to
$A_1$ red to get a red $T_{k-1}$.
Now we can color either the edges from $u_2$ to $A_1$ and
from $u_3$ to $A_3$ or the edges from $u_2$ to $A_3$ and from $u_3$
to $A_1$ with color blue to get a blue $T_{k}$.

Now there are at least $k-4$ uncolored edges incident to $a$, $k-5$
uncolored edges incident $b$ and $k-3$ uncolored edges incident to some $c \in A_3$.

It is easy to see that we can complete the edge-coloring of $G$ with three new colors to get $T_{k-5}$, $T_{k-4}$ and $T_{k-3}$.
\end{proof}

\section{Additional conjectures and results}

In this section we prove simple propositions for tree packings into graphs with minimum or average degree conditions. We also introduce some additional conjectures.

In the case of $k$-chromatic graphs, we could assume that the minimum degree is at least $k-1$.
This suggests the following generalization of Conjecture \ref{chromatic-conj}.

\begin{conj}\label{mindeg}
For $2 \leq i \leq k$, let $T_i$ be a tree on $i$ vertices.
If a graph $G$ has minimum degree $\delta(G)\ge k-1$,
then the set of trees $T_2, \dots, T_k$ has a packing into $G$.
\end{conj}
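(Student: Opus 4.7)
The plan is to mimic the strategy of Theorem \ref{main-thm}, but to replace the Grundy $k$-coloring of $G$ (which need not exist under a mere minimum-degree hypothesis) with a weaker substitute derived from $\delta(G)\ge k-1$. The proof of Theorem \ref{main-thm} repeatedly used two properties of $G$: that $\delta(G)\ge k-1$, and that there is an ordered partition $A_1,\dots,A_k$ such that every vertex of $A_i$ has a neighbor in each earlier class $A_j$. Conjecture \ref{mindeg} supplies the first of these for free, so the task is to engineer a structural replacement for the second.

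A natural candidate is the vertex ordering obtained by repeatedly peeling off a vertex of smallest remaining degree. Since $\delta(G)\ge k-1$, each of the first $|V(G)|-k+1$ peeled vertices has at least $k-1$ neighbors still present at the moment of its removal, and reversing this ordering yields an enumeration $v_1,v_2,\dots$ in which every $v_i$ has at least $\min\{k-1,i-1\}$ back-neighbors. This back-degree condition plays the role of ``every vertex of $A_i$ has a neighbor in every $A_j$ with $j<i$'' used throughout the proof of Theorem \ref{main-thm}. One then attempts to transcribe the case analysis of Claims \ref{c1}--\ref{c4} together with Cases 1 and 2 of the main proof, inducting on the size of a prefix $v_1,\dots,v_{|V(G)|-t}$ and using the peeled ``outside'' vertices $v_{|V(G)|-t+1},\dots,v_{|V(G)|}$ as the pool from which to pick the new leaves needed to complete the large trees.

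The main obstacle lies in the steps where several new leaves must be attached simultaneously to distinct outside vertices. In the Grundy setting, disjointness of the color classes $A_1,A_2,A_3$ automatically guaranteed the required distinctness; under a mere min-degree assumption, the candidate outside vertices for two different trees may collide, and automatic disjointness is lost. One is therefore forced to replace the color-class pigeonhole by a matching- or Hall-type argument in an auxiliary bipartite graph between specified interior vertices of the current packing and the outside pool. Arranging such a matching in every case distinction of Theorem \ref{main-thm}, while simultaneously preserving enough back-degree for the next inductive step, appears to be the technical heart of any attack on Conjecture \ref{mindeg}, and is likely to require ideas beyond the direct transcription sketched above; a probabilistic or absorption-type argument may well be necessary to close the general case, even if the special case of ``at most three non-stars'' yields to the direct method.
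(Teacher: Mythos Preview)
The statement you are attempting is Conjecture~\ref{mindeg}, and the paper does \emph{not} prove it; it is presented as an open problem. The only related result proved is Proposition~\ref{mindeg-prop}, which establishes the conclusion under the much stronger additional hypothesis that $|V(G)|>n_0(k)$ for some large constant $n_0(k)$, via an entirely different argument (iterated neighborhood expansion to locate each tree in turn, with room to spare). So there is no ``paper's own proof'' to compare your proposal against.

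Your proposal is itself not a proof, and you say so: you correctly identify the central obstruction, namely that the Grundy ordering used in Theorem~\ref{main-thm} guarantees \emph{distinct} outside vertices in pairwise disjoint classes $A_1,A_2,A_3$, whereas a degeneracy ordering derived from $\delta(G)\ge k-1$ only gives back-degree at least $k-1$ with no control on which back-neighbors are available. The collisions you describe are real, and your suggestion that a Hall-type or absorption argument might be needed is reasonable speculation, but it remains speculation. In short, your write-up is an honest assessment of why the method of Theorem~\ref{main-thm} does not transfer directly, not a proof of the conjecture; this is consistent with the paper, which leaves Conjecture~\ref{mindeg} open.
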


When the number of vertices of $G$ is large with respect to the minimum degree, then
Conjecture \ref{mindeg} is true:

\begin{prop}\label{mindeg-prop}
For $2 \leq i \leq k$, let $T_i$ be a tree on $i$ vertices. There is a constant $n_0(k)$ such that
if $G$ is a graph on $n > n_0(k)$ vertices and minimum degree $\delta(G)\ge k-1$, then
$T_2,\dots, T_k$ can be packed into $G$.
\end{prop}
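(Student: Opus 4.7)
The plan is to proceed by induction on $k$. The base case $k=2$ is immediate since $\delta(G)\ge 1$ yields an edge. For the inductive step, I would first embed $T_k$ into $G$ by greedy BFS from an arbitrary vertex: this works because at each of the $k$ embedding steps the current parent has at least $k-1$ neighbors in $G$ while fewer than $k$ vertices are already placed, so a free neighbor always exists. Setting $G^{*}=G\setminus E(T_k)$, I would then apply the inductive hypothesis to a suitable subgraph of $G^{*}$.

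Since the induction requires minimum degree at least $k-2$, I would pass to the $(k-2)$-core $H$ of $G^{*}$, obtained by iteratively deleting any vertex of degree less than $k-2$. The set of initially bad vertices (those of $G^{*}$-degree smaller than $k-2$) is contained in $V(T_k)$ and has size at most $k$; furthermore, each such vertex necessarily has $G$-degree strictly less than $2k-3$, because its $T_k$-degree is at most $k-1$. The key technical step is a cascade bound: the total number of peeled vertices is at most some function $f(k)$ depending only on $k$. Setting $n_0(k):=f(k)+n_0(k-1)$ then guarantees $|V(H)|\ge n_0(k-1)$, so by induction one packs $T_2,\dots,T_{k-1}$ into $H\subseteq G^{*}$; combined with the already-embedded $T_k$ this gives the required packing of $T_2,\dots,T_k$ into $G$.

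The main obstacle will be establishing the cascade bound uniformly over all graphs with $\delta(G)\ge k-1$, independently of the maximum degree. The idea is that a high-degree vertex has far too much slack ever to become bad, so the cascade can propagate only through vertices of small $G$-degree. A level-by-level edge count, starting from the observation that initial bad vertices lie among the at most $k$ vertices of $G$-degree at most $2k-3$ in $V(T_k)$, should show that each successive level of peeling adds at most polynomially many (in $k$) new bad vertices: a vertex $u$ can only be peeled at level $\ell+1$ if it has at least $\deg_G(u)-(k-3)$ of its neighbors already in the peeled set, forcing $\deg_G(u)$ itself to be bounded by $|P_\ell|+k-3$. Summing this geometrically in $k$ should yield an explicit polynomial $f(k)$ and hence an explicit $n_0(k)$.
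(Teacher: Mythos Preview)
Your inductive scheme has a genuine gap: the cascade bound you need is false in general. After deleting the edges of $T_k$, the $(k-2)$-core of $G^{*}$ can be empty even though $\delta(G)\ge k-1$ and $n$ is arbitrarily large, so no function $f(k)$ bounds the number of peeled vertices. Take $k=5$ and let $G$ be the circulant on $\{1,\dots,n\}$ with $i\sim i\pm 1,\ i\pm 2$; this graph is $4$-regular. Embed $T_5=P_5$ on the vertices $1,2,3,4,5$. In $G^{*}$ the vertices $2,3,4$ have degree $2$ and are peeled; this drops $1$ and $5$ to degree $2$; peeling those drops $6$ and $n$ to degree $2$; peeling those drops $7$ and $n-1$; and the process marches around the cycle two vertices at a time until everything is gone. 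Thus the $3$-core of $G^{*}$ is empty for every $n$, and your inductive hypothesis cannot be invoked. The heuristic that ``high-degree vertices have too much slack ever to become bad'' buys nothing here: in a $(k-1)$-regular graph every vertex sits at the minimum, and losing just two neighbours to the peeled set suffices to keep the cascade going indefinitely once $k\ge 5$.

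The paper sidesteps this by neither inducting on $k$ nor passing to a core. It proves a single lemma: if any set of at most $\binom{k}{2}$ edges is deleted from $G$, every tree on at most $k$ vertices still embeds; applying this repeatedly packs $T_k,T_{k-1},\dots,T_2$ one by one. For the lemma it peels only $k$ layers $B_1,\dots,B_k$, where $B_i$ consists of those neighbours of $\bigcup_{j<i}B_j$ having fewer than $k-1$ neighbours outside that union. Because every already-peeled vertex has at most $k-2$ neighbours outside the peeled set, $|\bigcup_{j\le i}B_j|\le k\,|\bigcup_{j<i}B_j|$, giving $|B|\le k^{k-1}(k^2-k)$. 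One then roots the tree at any vertex outside $B$ and embeds it by BFS, mapping level $i$ of the tree into $V\setminus(B_1\cup\dots\cup B_{k-i})$. The finite depth of the peeling, matched to the bounded depth of the tree, is precisely what makes the size bound work; replacing it by the full core computation, as you do, destroys it.
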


This proposition is an easy corollary of the following lemma. Indeed, by the lemma we can find and remove one by one all the required trees.

\begin{lem}
There is a constant $n_0(k)$ such that if $G$ is a graph on $n>n_0(k)$ vertices and minimum degree
$\delta(G)\ge k-1$, and $G'$ is the graph remaining after
removing an arbitrary set of ${k \choose 2}$ edges from $G$, then any
tree on $k$ vertices is a subgraph of $G'$.
\end{lem}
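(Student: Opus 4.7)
My plan is to embed $T$ greedily in BFS order into a ``safe'' region of $G'$, where no removed edge can interfere. Write $F \subseteq V(G)$ for the set of endpoints of the removed edges; then $|F| \leq 2\binom{k}{2} = k(k-1)$, and for every $v \notin F$ we have $N_{G'}(v) = N_G(v)$ and $\deg_{G'}(v) \geq k-1$.

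The key step will be to find a vertex $v^* \in V(G)$ at $G$-distance at least $k$ from every vertex of $F$. Once such a $v^*$ is secured, the closed $(k-1)$-ball $B := \{u : d_G(v^*, u) \leq k-1\}$ is disjoint from $F$, so $G[B]$ contains no removed edge and $G'[B] = G[B]$. Furthermore, every $u \in B$ at $G$-distance at most $k-2$ from $v^*$ has all of its $G$-neighbors inside $B$. I then root $T$ at $v^*$ and embed it in BFS order $v_1 = v^*, v_2, \ldots, v_k$: at step $i$ the parent image $\phi(p_i)$ lies in $B$ at distance at most $k-2$ from $v^*$, so has at least $k-1$ neighbors in $B$, of which at most $i-2$ are already used, leaving at least $k - i + 1 \geq 1$ valid choice for $\phi(v_i)$. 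The embedding sits entirely inside $G'[B] \subseteq G'$.

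To secure $v^*$ for $n > n_0(k)$, I would split on the maximum degree of $G$ with some threshold $M = M(k)$. If $\Delta(G) \leq M$, then the $(k-1)$-neighborhood of $F$ in $G$ has size at most $|F|(1 + M + \cdots + M^{k-1}) \leq k^3 M^{k-1}$, so taking $n_0(k) := k^3 M(k)^{k-1} + k$ guarantees some $v^*$ outside this neighborhood. If instead $\Delta(G) > M$, a separate argument is needed: some vertex $w$ of $G$ has extremely large degree, and picking $M(k)$ large enough (say $M(k) \geq \binom{k}{2} + |F| + k$) ensures $w$ has at least $k-1$ neighbors in $V \setminus F$ in $G'$, which can serve as the children of the root in an embedding of $T$ rooted at $w$, after which one iterates the safe-ball argument locally inside $N_G(w)$.

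The hardest part will be the large-degree case, since $\Delta(G) > M$ does not by itself keep any particular $(k-1)$-ball disjoint from $F$, and the deeper levels of $T$ may still meet $F$. The fix is to iterate the safe-ball reasoning within $N_G(w)$---because $w$'s degree greatly exceeds $|F|$, many of its non-$F$ neighbors themselves have $(k-2)$-balls clear of $F$, which lets the greedy continue down to depth $k-1$---or alternatively to induct on $k$ by deleting $w$ and invoking the lemma for trees on $k-1$ vertices in $G \setminus \{w\}$, carefully accounting for how many of the $\binom{k}{2}$ removed edges are made irrelevant by the deletion. Either route yields a concrete $n_0(k)$ polynomial in $k$.
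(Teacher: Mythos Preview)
Your bounded-degree case is correct: if $\Delta(G)\le M$, the $(k-1)$-neighbourhood of $F$ has size at most $|F|\sum_{i=0}^{k-1}M^i$, so for large $n$ some $v^*$ lies at distance $\ge k$ from $F$, and the BFS embedding inside its $(k-1)$-ball goes through exactly as you say.

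The genuine gap is the large-degree case; neither sketch closes it. For the first, take $G=K_n$: every vertex has degree $n-1>M$, yet for $k\ge 3$ the $(k-2)$-ball of \emph{any} vertex is all of $V$, so \emph{no} non-$F$ neighbour of $w$ has a $(k-2)$-ball clear of $F$, and ``iterating the safe-ball reasoning within $N_G(w)$'' yields nothing. For the second, deleting $w$ and inducting on $k$ does not match the inductive hypothesis: after mapping the root to $w$ you must embed a \emph{forest} on $k-1$ vertices with each component rooted at a prescribed $G'$-neighbour of $w$, whereas the lemma for $k-1$ only promises a single tree placed somewhere in a graph of minimum degree $\ge k-2$ with at most $\binom{k-1}{2}<\binom{k}{2}$ edges removed. (Incidentally, $n_0(k)=k^3 M(k)^{k-1}$ with $M(k)\gtrsim k^2$ is not polynomial in $k$.)

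The paper avoids the case split altogether with a degree-sensitive replacement for your bad set. Put $B_1=\{v:\deg_{G'}(v)<k-1\}$, and for $i\ge 2$ let $B_i$ consist of those vertices outside $\bigcup_{j<i}B_j$ that are neighbours of $\bigcup_{j<i}B_j$ \emph{and} have fewer than $k-1$ neighbours outside $\bigcup_{j<i}B_j$. Since every vertex already in $\bigcup_{j<i}B_j$ has fewer than $k-1$ neighbours outside it, one gets $|B_i|\le(k-1)\bigl|\bigcup_{j<i}B_j\bigr|$, whence $\bigl|\bigcup_{j\le k}B_j\bigr|\le k^{k-1}\cdot k(k-1)$ regardless of $\Delta(G)$. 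Then root $T$ at any vertex outside $B=\bigcup_{j\le k}B_j$ and embed level $i$ into $V\setminus(B_1\cup\cdots\cup B_{k-i})$; each parent at level $i-1$ has at least $k-1$ neighbours there by construction. This is your BFS idea, but with ``bad'' measured by low residual degree rather than by graph distance to $F$, which is precisely what makes the bound uniform in $\Delta(G)$.
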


\begin{proof}
Let $B_1$ be the set of vertices with degree less than $k-1$ in
$G'$. Let $B_2\subset V(G')\setminus B_1$ be the neighbors of
$B_1$ adjacent to less than $k-1$ vertices in $V(G')\setminus
B_1$. For $2 < i \leq k$, let $B_i\subset V(G')\setminus\cup_{j<i}
B_j$ be the neighbors of $\cup_{j<i} B_j$  adjacent to less than
$k-1$ vertices in $V(G') \setminus \cup_{j<i} B_j$. Finally let
$B=\cup_{i\le k}B_{i}$. Note that in each step $i$, each vertex in $\cup_{j<i} B_j$ is adjacent to less than $k-1$ vertices of $V(G')\setminus\cup_{j<i} B_j$.

Clearly $|B_1| \leq 2{k \choose 2}= k^2-k$. Then $|B_2|\le (k-1)|B_1|$ as each
vertex in $B_2$ is a neighbor of some vertex in $B_1$. For $2 < i \leq
k$, by the same argument we have $|B_i| \le (k-1)|\cup_{j<i}B_{j}|$, thus $|\cup_{j\le i}B_{j}|\le k|\cup_{j<i}B_{j}|$. So $|B|
= |\cup_{j\le k}B_{j}| \le k^{k-1}(k^2-k) $ and thus the cardinality of $B$
does not depend on $n$. Choose $n_0(k)$ to be bigger then this constant, this way there is a vertex not in $B$.

Choose an arbitrary vertex of the tree as a root and note that
each vertex has a fixed distance in the tree from the root, which is at most
$k-1$. We denote by \emph{level $i$} the set of vertices of the tree of distance
$i$ from the root. Identify the root with a vertex in
$V(G') \setminus B$. There are at most $k-1$ vertices in level $1$
and at least $k-1$ neighbors of the root in $V(G') \setminus (B_1
\cup B_2 \cup \dots \cup B_{k-1})$ so we can identify the vertices
in level 1 with the neighbors of the root in $V(G') \setminus (B_1
\cup B_2 \cup \dots \cup B_{k-1})$. Similarly by induction we can identify
vertices in level $i$ with vertices of distance $i$ from the root
in $V(G')\setminus (B_1 \cup B_2 \cup \dots \cup B_{k-i})$. Indeed, suppose we have identified levels $1$ through $i$ with vertices in $G'$. Denote by $V_i$ the vertices of $G'$ that are identified with vertices of level $i$ of the tree. Each vertex of $V_i$ has at least $k-1$ adjacent vertices in $V(G')\setminus (B_1 \cup B_2 \cup \dots \cup B_{k-i-1})$.
Since the order of the tree is $k$, and since at least one vertex of the tree is already identified with vertices in $G'$, we can easily identify the vertices of level $i+1$ with vertices that have not yet been used in the previous steps.
\end{proof}

The bound on $n$ given by the proof of Proposition \ref{mindeg-prop} can
probably be improved. However, it seems unlikely that Conjecture \ref{chromatic-conj} can be proved with this type of argument.

We can weaken the minimum degree condition in Conjecture \ref{mindeg} to get an even stronger conjecture.

\begin{conj}\label{avgdeg}
For $2 \leq i \leq k$, let $T_i$ be a tree on $i$ vertices.
If the graph $G$ has average degree at least $k-1$, i.e. $G$ has at least $\frac{k-1}{2}n$ edges,
then the set of trees $T_2, \dots, T_k$ has a packing into $G$.
\end{conj}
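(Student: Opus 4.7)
The most direct plan is to reduce Conjecture \ref{avgdeg} to Conjecture \ref{mindeg}: if one could always extract from $G$ a subgraph of minimum degree at least $k-1$, the result would follow. Unfortunately, the standard peeling argument---iteratively deleting vertices of degree less than $k-1$---does not preserve the average-degree bound, and only guarantees a subgraph of minimum degree at least $\lceil (k-1)/2 \rceil$. So this reduction fails immediately and a different strategy is needed.

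A more promising attack is to pack the trees greedily from largest to smallest. Assuming the Erd\H{o}s--S\'os conjecture as a black box, first embed $T_k$ into $G$ (whose average degree $k-1$ sits above the Erd\H{o}s--S\'os threshold $k-2$), delete its $k-1$ edges, and iterate. When it is time to embed $T_j$, the total number of previously removed edges is $\binom{k}{2} - \binom{j}{2} = \frac{(k-j)(k+j-1)}{2}$, so the current graph still contains at least $\frac{(k-1)n}{2} - \frac{(k-j)(k+j-1)}{2}$ edges. A short calculation shows this exceeds the Erd\H{o}s--S\'os threshold $\frac{(j-2)n}{2}$ for $T_j$ whenever $n$ is at least a linear function of $k$. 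Conditional on Erd\H{o}s--S\'os, Conjecture \ref{avgdeg} therefore holds for all $n$ sufficiently large with respect to $k$, in analogy with Proposition \ref{mindeg-prop}.

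The main obstacle is unavoidable: any packing of $T_2, \ldots, T_k$ into $G$ in particular contains $T_k$ as a subgraph, so Conjecture \ref{avgdeg} already implies the Erd\H{o}s--S\'os conjecture, one of the most celebrated open problems in extremal graph theory. A realistic intermediate target is an asymptotic version for $n$ large with respect to $k$: one could combine the Ajtai--Koml\'os--Simonovits--Szemer\'edi resolution of Erd\H{o}s--S\'os for sufficiently large trees with the greedy peeling above to cover the regime where every $T_i$ has size comparable to $k$ and $n$ is much larger still. The small-$n$ regime, where the extremal condition is tightest, seems to require genuinely new ideas.
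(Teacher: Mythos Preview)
The statement is a \emph{conjecture}; the paper does not prove it, and your write-up is likewise a discussion of obstacles and partial approaches rather than a proof. On that level your second paragraph matches the paper exactly: the paper also observes that, assuming Erd\H{o}s--S\'os, one can greedily embed $T_k, T_{k-1},\dots$ and maintain the required edge count provided $n$ is at least linear in $k$. The paper phrases this as a one-step induction on $k$ and obtains the clean threshold $n\ge 2k$, whereas you track the cumulative number of deleted edges; the content is the same.

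Your third paragraph, however, contains a genuine error. You assert that Conjecture~\ref{avgdeg} \emph{implies} the Erd\H{o}s--S\'os conjecture because a packing of $T_2,\dots,T_k$ in particular contains a copy of $T_k$. This is false: the hypothesis of Conjecture~\ref{avgdeg} is average degree at least $k-1$, while Erd\H{o}s--S\'os asks for the same conclusion (a copy of $T_k$) under the weaker hypothesis of average degree strictly greater than $k-2$. The implication therefore runs the \emph{other} way: Erd\H{o}s--S\'os (for large $n$) implies Conjecture~\ref{avgdeg}, which is precisely what the paper records. Conjecture~\ref{avgdeg} only yields the off-by-one statement that average degree $\ge k-1$ forces $T_k\subseteq G$, which is strictly weaker than Erd\H{o}s--S\'os. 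So the ``main obstacle'' you identify is not actually an obstacle, and there is no a priori reason to expect Conjecture~\ref{avgdeg} to be as hard as Erd\H{o}s--S\'os.
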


In this setting it is easy to prove an analogue of the previously-mentioned result of Bollob\'as \cite{Bo}.

\begin{prop}\label{avg-deg-thm}
Given a fixed $s\le k/2$, for $2 \leq i \leq s$, let $T_i$ be a tree on $i$ vertices.
If $G$ is a graph with $n$ vertices and at least $\frac{k-1}{2}n$ edges where $k \le n$, then the set of trees $T_2 \dots, T_s$ has a packing into $G$.
\end{prop}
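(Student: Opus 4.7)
The plan is to embed $T_s,T_{s-1},\dots,T_2$ in this (decreasing) order, at each step finding a subgraph of the current remaining graph whose minimum degree is large enough to accommodate the next tree. The argument rests on two classical facts: (i) every graph $G'$ has a subgraph $H'$ with $\delta(H')\ge e(G')/v(G')$, obtained by iteratively deleting a vertex of degree strictly below $e(G')/v(G')$ (such a deletion does not decrease the edge-density $e/v$); and (ii) a graph of minimum degree at least $t-1$ contains every tree on $t$ vertices, by a routine leaf-stripping induction. Both facts are already used implicitly elsewhere in the paper.

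Assume $T_s,\dots,T_{i+1}$ have been placed edge-disjointly in $G$; the number of edges used so far is $\sum_{j=i+1}^{s}(j-1)=\binom{s}{2}-\binom{i}{2}$. Let $G_i$ denote $G$ with these edges removed, so that $v(G_i)=n$ and
\[
\frac{e(G_i)}{n}\;\ge\;\frac{k-1}{2}-\frac{\binom{s}{2}-\binom{i}{2}}{n}.
\]
Fact (i) produces a subgraph $H_i$ of $G_i$ with $\delta(H_i)$ at least this quantity, and to invoke (ii) for $T_i$ it suffices to check
\[
\frac{k-1}{2}-\frac{\binom{s}{2}-\binom{i}{2}}{n}\;\ge\;i-1,
\]
which rearranges to $n(k-2i+1)\ge (s-i)(s+i-1)$.

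Writing $j:=s-i\ge 0$ and using the hypotheses $k\ge 2s$ and $n\ge k\ge 2s$, the left-hand side is at least $2s(2j+1)$ while the right-hand side equals $j(2s-j-1)$; the inequality then reads $2sj+2s+j^2+j\ge 0$, which is trivial. Hence $\delta(H_i)\ge i-1$, so $T_i$ embeds in $H_i$, and iterating from $i=s$ down to $i=2$ yields the required packing. There is no genuine obstacle in the argument: the proposition reduces to an elementary edge-count, and the slack $k\ge 2s$ in the hypothesis is exactly what is needed to absorb the (at most) $\binom{s}{2}$ edges lost during the packing process.
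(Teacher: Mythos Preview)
Your argument is correct and follows the same strategy as the paper: embed $T_s,T_{s-1},\dots,T_2$ greedily, at each step passing to a subgraph of large minimum degree (via the ``delete low-degree vertices'' trick) and then using that any tree on $i$ vertices embeds in a graph of minimum degree $\ge i-1$. The only cosmetic difference is bookkeeping: the paper phrases it as an induction on $k$ (after removing $T_s$ one has at least $\tfrac{k-1}{2}n-(\tfrac{k}{2}-1)\ge\tfrac{k-2}{2}n$ edges, so the hypothesis applies with $k-1$ and $s-1$), whereas you track the cumulative edge loss $\binom{s}{2}-\binom{i}{2}$ directly and verify the inequality $n(k-2i+1)\ge (s-i)(s+i-1)$ at each step.
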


\begin{proof}
We proceed by induction on $k$. For $k=1$ the statement of the proposition obviously holds.
Now let us assume that $k > 1$ and the statement of the proposition holds for all values less than $k$.

Let $G$ be the graph in the statement of the proposition. Remove  all
vertices of $G$ with degree less than $\frac{k-1}{2}$. Let us
continue to remove all vertices with degree less than
$\frac{k-1}{2}$ from the resulting graphs until the procedure
stops. In each round the average degree cannot decrease, so when
the procedure stops we are left with a graph with minimum degree
at least $\frac{k-1}{2}$. Thus $G$ contains a subgraph with
minimum degree at least $\frac{k-1}{2} \geq \frac{k}{2}-1$.

It is easy to see that any tree $T_i$ is a subgraph (i.e. has a packing) into a graph with
minimum degree $i-1$.
Thus $T_s$ has a packing into $G$ as $s \leq \frac{k}{2}$
and the minimum degree of $G$ is at least $\frac{k}{2}-1$.
If we remove the edges of $T_s$ from $G$ we are left with a
graph with at least $\frac{k-1}{2}n - (\frac{k}{2}-1) \geq \frac{k-2}{2}n$ edges as $k \le n$
and we are done by induction.
\end{proof}

Moreover, from the proof it is easy to see that
if we have a graph as in the statement of Proposition \ref{avg-deg-thm}
then any packing of $T_i, \dots, T_s$ into $G$
can be extended to a packing $T_2, \dots, T_{s}$
using the remaining edges of $G$.

Conjecture \ref{avgdeg} is strongly related to the following
conjecture of Erd\H os and S\' os \cite{Er}.

\begin{conj}[Erd\H os and S\' os \cite{Er}]
Let $T_k$ be a tree with $k$ vertices.
If $G$ is a graph with $n$ vertices and more than
$\frac{k-2}{2}n$ edges, then $T_k$ is a subgraph of $G$.
\end{conj}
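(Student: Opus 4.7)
The plan is induction on $k$, with the base case $k=2$ trivial. For the inductive step, given $G$ on $n$ vertices with more than $\frac{k-2}{2}n$ edges, I would first mimic the reduction used in the proof of Proposition \ref{avg-deg-thm}: iteratively delete vertices of degree at most $\frac{k-2}{2}$; each such deletion decreases the edge count by at most $\frac{k-2}{2}$, so the procedure terminates at a nonempty subgraph $G'$ of minimum degree at least $\frac{k-1}{2}$. This is the only robust structural information the hypothesis directly provides.

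Next, choose a leaf $\ell$ of $T_k$ with neighbor $v$, set $T_k':=T_k-\ell$, and try to embed $T_k'$ into $G'$ by the inductive hypothesis (after adjusting the edge count appropriately). To finish, map $\ell$ to an unused neighbor of the image of $v$; this step is fine whenever the image of $v$ still has at least one free neighbor, which holds as long as the minimum degree of $G'$ exceeds $k-2$. An alternative worth pursuing in parallel is to embed $T_k$ directly, rooting it at a vertex $u$ of maximum degree in $G'$ and building up level by level in the style of the lemma preceding Proposition \ref{mindeg-prop}, exploiting the fact that trees have many leaves so that the full neighborhood budget is only needed at a few branching vertices.

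The main obstacle, as one must expect, is that this is the Erd\H os--S\' os conjecture. Neither approach as stated actually closes. Leaf-deletion induction fails because the inductive hypothesis for $k-1$ demands more than $\frac{k-3}{2}n$ edges, but after embedding $T_k'$ we may lose up to $k-2$ edges at a single vertex, destroying the edge-density needed to continue; a level-by-level greedy embedding works comfortably only when the minimum degree is at least $k-1$, whereas our reduction yields only $\frac{k-1}{2}$, exactly a factor of two short. Any real proof would seem to require either a stability-style case split (according as $T_k$ is close to a path, where Erd\H os--Gallai's path theorem takes over, or contains a vertex of large degree, where a direct extremal argument in a high-minimum-degree subgraph succeeds) or a regularity-lemma approach in the spirit of Ajtai--Koml\'os--Simonovits--Szemer\'edi, which is known to resolve the conjecture for all sufficiently large $k$.
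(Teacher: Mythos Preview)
The paper does not prove this statement: it is quoted as the Erd\H os--S\'os \emph{conjecture} and used only hypothetically, to observe that (if true) it would imply Conjecture~\ref{avgdeg} when $n\ge 2k$. There is therefore no proof in the paper to compare your proposal against.

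Your own write-up is candid about this: you correctly note that the minimum-degree reduction only yields $\delta(G')\ge\frac{k-1}{2}$, a factor of two short of what a greedy level-by-level embedding needs, and that leaf-deletion induction does not preserve the required edge density. So you have not produced a proof, and you should not expect to --- the Erd\H os--S\'os conjecture is (for general $k$) a genuinely hard open problem, with the announced Ajtai--Koml\'os--Simonovits--Szemer\'edi resolution for large $k$ relying on the regularity method, well beyond the elementary tools you sketch. The right response here is simply to recognize that the paper is quoting a conjecture, not asserting a theorem, and no proof is called for.
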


At first glance, Conjecture \ref{avgdeg} seems to ask for much more
as we have only a few more edges but we want to pack many more trees.
However, for graphs $G$ where $n \geq 2k$, if true, the Erd\H os-S\'os Conjecture easily implies
Conjecture \ref{avgdeg}.

In particular, let $G$ be a graph given in Conjecture \ref{avgdeg}
and let us assume that the Erd\H os-S\'os Conjecture is true. Then
$T_k$ is a subgraph of $G$ as $G$ has more than $\frac{k-2}{2}n$
edges. Removing $T_k$ from $G$ yields a graph $G'$ with $e(G')
\geq \frac{k-1}{2}n - (k-1) > \frac{k-2}{2}n$ as $n \geq 2k$. Thus by the Erd\H os-S\'os Conjecture, $G'$ has $T_{k-1}$ as a subgraph. This argument can be continued to find all the trees required by
Conjecture \ref{avgdeg}.

In this paper we proved most of the known results concerning the TPC in the more general setting where we pack trees into any $k$-chromatic graph.
However, missing from the more general setting is the analogue of the result of Gy\'arf\'as and Lehel \cite{GyLe} that states that the trees can be packed into $K_n$ if each tree is a path or a star.


\begin{thebibliography}{99}\fontsize{10}{0}

	

\bibitem{Bo}
Bollob\'as, B.,
Some remarks on packing trees.
{\it Discrete Math.} {\bf 46} (1983), no. 2, 203--204.

\bibitem{CaRo}
Caro, Y. Roditty, Y.,
A note on packing trees into complete bipartite graphs and on Fishburn's conjecture.
{\it Discrete Math.} {\bf 82} (1990), no. 3, 323--326.

\bibitem{ChSe}	Christen, C.A., Selkow S.M.,
Some perfect coloring properties of graphs.
{\it J. Comb. Theory, Ser. B} {\bf 27}, no. 1, (1979), 49--59.

\bibitem{Do1}
Dobson, E.,
Packing almost stars into the complete graph.
{\it J. Graph Theory} {\bf 25} (1997), no. 2, 169--172.

\bibitem{Do2}
Dobson, E.,
Packing trees into the complete graph.
{\it Combin. Probab. Comput.} {\bf 11} (2002), no. 3, 263--272.

\bibitem{Do3}
Dobson, E.,
Packing trees of bounded diameter into the complete graph.
{\it Australas. J. Combin.} {\bf 37} (2007), 89--100.

\bibitem{Er}
Erd\H os, P.,
Extremal problems in graph theory, in: M. Fiedler, ed., Theory of Graphs and its Applications (Academic Press, New York, 1965) 29--36.

\bibitem{GyLe}
Gy\'arf\'as, A.; Lehel, J.,
Packing trees of different order into $K_{n}$.
Combinatorics (Proc. Fifth Hungarian Colloq., Keszthely, 1976), Vol. I, pp. 463--469,
Colloq. Math. Soc. J\'anos Bolyai, 18, North-Holland, Amsterdam-New York, 1978.

\bibitem{Ho}
Hobbs, A., Packing trees. Proceedings of the Twelfth Southeastern Conference on Combinatorics, Graph Theory and Computing, Vol. II (Baton Rouge, La., 1981). {\it Congr. Numer.} {\bf 33} (1981), 63--73.

\bibitem{HoBoKa}
Hobbs, A.; Bourgeois, B.; Kasiraj, J.,
Packing trees in complete graphs. {\it Discrete Math.} {\bf 67} (1987), no. 1, 27--42.

\bibitem{Ro}
Roditty, Y., Packing and covering of the complete graph. III.
On the tree packing conjecture. {\it Sci. Ser. A Math. Sci.} (N.S.) {\bf 1} (1988), 81--85.

\bibitem{Ro1} Roditty, Y., personal communication.

\bibitem{Yu}
Yuster, R., On packing trees into complete bipartite graphs. {\it Discrete Math.} {\bf 163} (1997), no. 1-3, 325--327.

\bibitem{ZaLi}
Zaks, S.; Liu, C. L.,
Decomposition of graphs into trees. Proceedings of the Eighth Southeastern Conference on Combinatorics, Graph Theory and Computing (Louisiana State Univ., Baton Rouge, La., 1977), pp. 643--654. Congressus Numerantium, No. XIX, Utilitas Math., Winnipeg, Man., 1977.

\end{thebibliography}
\end{document}